\numberwithin{equation}{section}
\def\hangbox to #1 #2{\vskip3pt\hangindent #1\noindent \hbox to #1{#2}$\!\!$}
\newtheorem{thm}{Theorem}[section]
\newtheorem{lem}[thm]{Lemma}
\newtheorem{example}[thm]{Example}
\newtheorem{prop}[thm]{Proposition}
\theoremstyle{definition}
\newtheorem{defn}[thm]{Definition}
\newtheorem{problem}[thm]{Problem}
\theoremstyle{remark}
\def\N{{\mathbb N}}
\def\R{{\mathbb R}}
\def\Z{{\mathbb Z}}
\newcommand{\norm}[1]{\left\lVert#1\right\rVert}
\def\sfrac#1#2{\kern.1em\raise.5ex\hbox{$#1$}
        \kern-.1em/\kern-.05em\lower.25ex\hbox{$#2$}}
\def\vp{\varepsilon}
\newcommand{\fw}{\text{\fw}}
\begin{document}
\allowdisplaybreaks
\title{Continuous Schauder frames for Banach spaces}

\author{Joseph Eisner}
\address{Department of Mathematics\\
University of Virginia\\
Charlottesville, VA 22901  USA}
 \email{je5pd@virginia.edu }

\author{ Daniel Freeman}
\address{Department of Mathematics and Statistics\\
St Louis University\\
St Louis MO 63103  USA\\ and\\ Department of Mathematics\\ Duke University\\ Durham NC 27708 USA
} \email{daniel.freeman@slu.edu}

\thanks{
The second author was supported by grant 353293 from the Simon's foundation.  This paper forms a portion of the first authors masters thesis which was prepared at St Louis University.}

\thanks{2010 \textit{Mathematics Subject Classification}: 42C15, 81R30, 46B10 }

\begin{abstract}

We introduce the notion of a continuous Schauder frame for a Banach space.  This is both a generalization of continuous frames and coherent states for Hilbert spaces and a generalization of unconditional Schauder frames for Banach spaces.  As a natural example, we prove that any wavelet for $L_p(\R)$ with $1<p<\infty$ generates a continuous wavelet Schauder frame.  Furthermore, we generalize the properties shrinking and boundedly complete to the continuous Schauder frame setting, and prove that many of the fundamental James theorems still hold in this general context.

\end{abstract}

\maketitle

\section{Introduction}\label{S:Intro}
In Hilbert spaces, frames and orthonormal bases give discrete ways to represent vectors using series, and continuous frames and coherent states give continuous ways to represent vectors using integrals.    A {\em frame} for a Hilbert space $H$ is a collection of vectors
$(x_j)_{j\in J}\subset H$ for which there exists constants
$0\leq A\leq B$ such that for any $x\in H$,
$A\|x\|^2\leq \sum_{j\in J}|\langle x, x_j\rangle|^2\leq
B\|x\|^2$.
 Given any frame $(x_j)_{j\in J}$ for a Hilbert space $H$,
 there exists a frame $(f_j)_{j\in J}$ for $H$, called a {\em
dual frame}, such that 
\begin{equation}\label{E:0}
x=\sum_{j\in J} \langle x, f_j\rangle x_j\quad \textrm{ for all $x\in H$}.
\end{equation}
The equality in (\ref{E:0}) allows the reconstruction of any vector
$x$ in the Hilbert space from the sequence of coefficients $(\langle
x, f_j\rangle)_{j\in J}$.  Continuous frames and coherent states are a generalization of frames, in that instead of summing over a discrete set, we integrate over a measure space.   Coherent states were invented  by Schr\"odinger in 1926 \cite{S} and were generalized to continuous frames by Ali, Antoine, and Gazeau \cite{AAG1}. The short time Fourier transform and the continuous wavelet transform are two particularly important examples of continuous frames.  
Let $(M, \Sigma, \mu)$ be a $\sigma$-finite measure space and let $H$ be a separable Hilbert space.  A measurable function $\psi:M\rightarrow H$ is a {\em continuous frame} of $H$ with respect to $\mu$ if there exists constants $A,B>0$ such that for all $x\in H$, 
$A\|x\|^2\leq \int_M |\langle x, \psi(t)\rangle|^2 d\mu(t)\leq B \|x\|^2$.
If $A=B=1$ then the continuous frame is called a {\em coherent state}.
As is the case with frames, any continuous frame may be used to reconstruct vectors using a dual frame.  That is, if $\psi:M\rightarrow H$ is a continuous frame, then there exists a {\em dual frame} $\phi:M\rightarrow H$ such that
\begin{equation}\label{E:c0}
x=\int_M \langle x, \phi(t)\rangle \psi(t) d\mu(t)\quad \textrm{ for all $x\in H$}.
\end{equation}
Equation \eqref{E:c0} involves integrating vectors in a Hilbert space, and is defined weakly using the Pettis Integral.  We will define the Pettis Integral and discuss it further in Section \ref{S:CSF}.

Frames for Hilbert spaces have been generalized to Banach spaces in multiple ways, such as atomic decompositions \cite{FG1}, Banach frames \cite{G}, framings \cite{CHL}, and Schauder frames \cite{CDOSZ}.  These particular methods are based on extending the reconstruction formula \eqref{E:0} to Banach spaces.  Given a Banach space $X$ with dual $X^*$, a sequence of pairs $(x_j,f_j)_{j=1}^\infty\subseteq X\times X^*$ is called a {\em Schauder frame} of $X$ if
\begin{equation}\label{E:SF}
x=\sum_{j=1}^\infty f_j(x) x_j\quad \textrm{ for all $x\in X$}.
\end{equation}
Thus, Schauder frames are direct generalizations of the reconstruction formula for frames in Hilbert spaces.  A Schauder frame is called {\em unconditional} or a {\em framing} if the series in \eqref{E:SF} converges in any order.  Though coherent states and continuous frames in Hilbert spaces have long been studied and play important roles in mathematical physics and harmonic analysis, the natural extension to continuous Schauder frames has only been considered for certain Banach spaces using coorbit theory \cite{FG1}\cite{FG2}\cite{FG3}\cite{FoR} and for complemented subspaces of $L_p$ using p-frames \cite{FO}.  
Given a Banach space $X$ with dual $X^*$ and a $\sigma$-finite measure space $(M, \Sigma, \mu)$, we call  a measurable function $t\mapsto(x_t,f_t)\in X\times X^*$ a {\em continuous Schauder frame} of $X$ if for all $x\in X$,
\begin{equation}\label{E:cSFdef}
x=\int_M f_t(x) x_t d\mu(t).
\end{equation}
As with continuous frames for Hilbert spaces, the integral in Equation \eqref{E:cSFdef} involves integrating vectors and is defined weakly using the Pettis Integral which we define in Section \ref{S:CSF}.  Unlike series, there is no order for integration, and so all continuous Schauder frames are by necessity unconditional.  
 In the case that the measure space $(M,\mu)$ is simply the natural numbers with counting measure, then $(x_n, f_n)_{n\in\N}$ is a continuous Schauder frame if and only if it is an unconditional Schauder frame.   Thus, continuous frames are indeed generalizations of unconditional Schauder frames.

 Many coherent states and continuous frames are of the form $(S^a f)_{a\in \Lambda}$ or $(S^aT^b f)_{(a,b)\in \Lambda}$ for some isometries $S$ and $T$ and continuous ring $\Lambda$.  Many important bases and frames can be obtained by sampling the continuous frame at a discrete sub-lattice of $\Lambda$.  For example, wavelet bases and Gabor frames can be obtained by sampling the continuous wavelet transform and short time Fourier transform respectively.  Discrete frames and bases are well suited for computations, but the continuous frames they are sampled from are often very useful to work with directly themselves.  For example, if one translates a function in $L_2(\R)$ then the resulting continuous wavelet frame coefficients are simply shifted by the amount translated, however if one translates a function by a non-integer value then the resulting discrete wavelet coefficients are completely different.  In Section \ref{S:ex} we show that any discrete wavelet for $L_p(\R)$ for $1<p<\infty$ gives rise to a continuous wavelet Schauder frame and thus much of the analysis that is done for the continuous wavelet transform in Hilbert spaces may be done in the Banach space setting as well.
 
 We also consider how to generalize other properties of Schauder frames to continuous Schauder frames.  In \cite{CL} and \cite{L}, they define the properties shrinking and boundedly complete for Schauder frames and prove that many of James' classic theorems \cite{Ja} on shrinking and boundedly complete Schauder bases can be extended to Schauder frames.  In particular, a Schauder frame $(x_j,f_j)_{j=1}^\infty$ of  $X$ is shrinking if and only if $(f_j,x_j)_{j=1}^\infty$ is a Schauder frame of $X^*$, and if $(x_j,f_j)_{j=1}^\infty$ is shrinking and boundedly complete then $X$ is reflexive.  On the other hand, in \cite{BFL} they prove that every infinite dimensional Banach space which has a Schauder frame also has a non-shrinking Schauder frame, so unlike for Schauder bases the converse of the previous theorem for Schuader frames does not hold.  In \cite{CLS}, they prove that an unconditional Schauder frame is shrinking if and only if the Banach space does not contain $\ell_1$, and that an unconditional Schauder frame is boundedly complete if and only if the Banach space does not contain $c_0$.  In Section \ref{S:sbc} we define what it means for a continuous Schauder frame to be shrinking or boundedly complete, and we prove that the previous stated theorems are true for continuous Schauder frames as well.  Sections \ref{S:ex} and \ref{S:ex2} are devoted to constructing examples of continuous Schauder frames in $L_p(\R)$ and $\ell_p$ for $1<p<\infty$.  Our final section proposes an open problem on when continuous Schauder frames may be sampled to obtain a discrete Schauder frame.

 \section{The Pettis Integral and Continuous Schauder frames}\label{S:CSF}
 The main concept of the Pettis integral is to integrate vector valued functions by considering the Lebesgue integrals of the real valued functions formed by composing with linear functionals.  This method allows one to transfer many of the fundamental properties of Lebesgue integration to the Banach space setting.  
\begin{defn} Let $(M, \Sigma, \mu)$ be a $\sigma$-finite measure space and $X$ a separable Banach Space. A vector-valued function $F: M \rightarrow X$ is said to be $\mu$-\textit{Pettis integrable} (or \textit{Pettis integrable}, or merely \textit{integrable} if context is understood) if for any $E \in \Sigma$ there exists $x_E \in X$ such that $f(x_E) = \int_E f(F) d\mu$ for all $f \in X^{*}$ (where this latter integral is Lebesgue). Then we say $\int_E F d\mu = x_E$ and, in particular, $\int F d\mu = x_M$.
\end{defn}
If the vector valued map takes values in a dual space $X^*$ then one can instead consider just using the weak*-continuous linear functionals.
\begin{defn} Let $(M, \Sigma, \mu)$ be a $\sigma$-finite measure space and $X$ a separable Banach Space with dual $X^{*}$. A functional-valued function $G: M \rightarrow X^{*}$ is said to be $\mu$-\textit{Pettis* integrable} (or \textit{Pettis *integrable}, or merely \textit{*integrable} if context is understood) if for any $E \in \Sigma$ there exists $f_E \in X^{*}$ such that $f_E(x) = \int_E G(x) d\mu$ for all $x \in X$. Then we say $\int_E^{*} G d\mu = f_E$ and, in particular, $\int^{*} G d\mu = f_M$.
\end{defn}
 
 Recall that we use the Pettis integral to define continuous Schauder frames, and we will use the Pettis* integral to define continuous* Schauder frames.
 \begin{defn}
 Given a separable Banach space $X$ with dual $X^*$ and a $\sigma$-finite measure space $(M, \Sigma, \mu)$,  a measurable function $t\mapsto(x_t,f_t)\in X\times X^*$ is called a {\em continuous Schauder frame} of $X$ if for all $x\in X$,
\begin{equation}\label{E:cSF}
x=\int_M f_t(x) x_t d\mu(t).
\end{equation}
 The dual map $t\mapsto(f_t,x_t)\in X^*\times X$ is called a {\em continuous* Schauder frame} of $X$ if for all $f\in X$,
\begin{equation}\label{E:cSF}
f=\int^*_M f(x_t) f_t d\mu(t).
\end{equation}
 \end{defn}

 A sequence of vectors $(x_j)_{j=1}^\infty$ is a Schauder basis for a Banach space $X$, if and only if the biorthogonal functionals $(x^*_j)_{j=1}^\infty$ are a $w^*$-Schauder basis for $X^*$.  That is, for all $f\in X^*$, the series $\sum_{j=1}^\infty f(x_j) x^*_j$ converges $w^*$ to $f$.  
We will prove in Lemma \ref{L:DualFrame1} that if $(x_t,f_t)_{t\in M}$ is a continuous Schauder frame of $X$ then $(f_t,x_t)_{t\in M}$ is a continuous* Schauder frame of $X^*$.  This relationship will be useful for us when using duality techniques.  However, Example \ref{E:notCSF} shows that the converse does not always hold for continuous Schauder frames.

By the definition of the Pettis integral, $t\mapsto(x_t,f_t)\in X\times X^*$ is a continuous Schauder frame of $X$ if and only if for all $x\in X$, $f\in X^*$, and $E\in \Sigma$ there exists $x_E\in X$ such that $x=x_X$ and
\begin{equation}\label{E:cSFP}
f(x_E)=\int_E f_t(x) f(x_t) d\mu(t).
\end{equation}
 We are primarily interested in the representation of $x$ as 
$x=\int_M f_t(x) x_t d\mu(t)$, and so it may feel tedious to check Equation \eqref{E:cSFP} for all measurable sets $E\in \Sigma$.  However, the following example shows that it is necessary to check \eqref{E:cSFP} for all $E\in \Sigma$ and not just $E=M$.

\begin{example}\label{E:notCSF}
Let $(e_j)_{j\in\N}$ be the unit vector basis for $c_0$ with biorthogonal functionals $(e_j^*)_{j\in\N}$.  Consider the following sequence of pairs in $c_0\times \ell_1$,
$$(x_n,f_n)_{n=1}^\infty=(e_1,e^*_1),(e_1,-e^*_1),(e_1,e^*_1),(e_2,e_2^*),(e_2,-e_1^*),(e_2,e_1^*),(e_3,e_3^*),(e_3,-e_1^*),(e_3,e_1^*),...
$$
If we consider $\N$ with counting measure, then for all $x\in c_0$ and $f\in \ell_1$, $f(x)=\int_\N f_n(x) f(x_n)$.  However, $(x_n,f_n)_{n\in\N}$ is not a continuous Schauder frame.  
Indeed, let $x=e_1$ and suppose $x_{3\N}\in c_0$ is such that for all $f\in \ell_1$, $f(x_{3\N})=\int_{3\N}f_n(e_1)f(x_n)$.  Then for all $n\in\N$, $e_n^*(x_{3\N})=1$.  Thus, $x_{3\N}=(1,1,1,...)\not\in c_0$ which is a contradiction. 
\end{example}

What is particularly interesting about Example \ref{E:notCSF} is that although $(x_n,f_n)_{n\in\N}$ is not a continuous Schauder frame of $c_0$, we do have that the dual $(f_n, x_n)_{n\in\N}$ is a continuous* Schauder frame of $\ell_1$.  Indeed, if $f\in \ell_1$ and $E\subseteq \N$ then
$$f_E=\sum_{n\in M\cap(3\N-2)} f(e_{(n+2)/3})e_{(n+2)/3}^*+\sum_{n\in M\cap 3\N} f(e_{n/3})e_1^*-\sum_{n\in M\cap (3\N-1)} f(e_{(n+1)/3})e_1^*.
$$
As $f\in\ell_1$, we have that all the above series converge in norm to an element of $\ell_1$.  Thus, we have that it is possible to have a continuous* Schauder frame $(f_t, x_t)_{t\in M}$ for a dual space $X^*$ such that $(x_t, f_t)_{t\in M}$ is not a continuous Schauder frame for $X$.

The following lemma shows that any continuous Schauder frame or continuous* Schauder frame satisfies an unconditionality inequality.

\begin{lem}\label{L:suppression}
Let either $(x_t,f_t)_{t\in M}\in X\times X^*$ be a continuous Schauder frame of a Banach space $X$ or $(f_t,x_t)_{t\in M}\in X^*\times X$ be a continuous* Schauder frame for $X^*$.  Then, there exists a constant $B>0$ such that for every measurable set $E$ in $M$ and every $x\in X$ and $f\in X^*$ we have that 
$$|f(x_E)|=|f_E(x)|=\left|\int_E f_t(x) f(x_t) d\mu(t)\right|\leq B \|x\|\|f\|$$
\end{lem}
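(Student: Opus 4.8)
First, the two equalities in the statement are essentially definitional. In the first case $x_E$ is the Pettis integral of $t\mapsto f_t(x)x_t$ over $E$, so $f(x_E)=\int_E f_t(x)f(x_t)\,d\mu(t)$ by definition; and by Lemma~\ref{L:DualFrame1}, $(f_t,x_t)_{t\in M}$ is a continuous* Schauder frame of $X^*$, so $f_E:=\int^*_E f(x_t)f_t\,d\mu(t)\in X^*$ is defined with $f_E(x)=\int_E f_t(x)f(x_t)\,d\mu(t)$ as well. In the second case $f_E\in X^*$ is given directly by the Pettis* hypothesis, while $x_E$ is to be interpreted as the element of $X^{**}$ constructed below (it need not lie in $X$, by Example~\ref{E:notCSF}). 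So in both cases everything reduces to producing $B>0$ with $\bigl|\int_E f_t(x)f(x_t)\,d\mu(t)\bigr|\le B\|x\|\,\|f\|$ for all $E\in\Sigma$, $x\in X$, $f\in X^*$.

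I would prove this by a two-stage use of the uniform boundedness principle, carried out in $X^{**}$ so that both cases are treated uniformly. By the definition of the Pettis (respectively Pettis*) integral, for each $x\in X$ the scalar function $t\mapsto f_t(x)g(x_t)$ lies in $L_1(\mu)$ for every $g\in X^*$; hence $t\mapsto f_t(x)x_t$ is Dunford integrable, and I write $x_E\in X^{**}$ for its Dunford integral over $E$, characterised by $x_E(g)=\int_E f_t(x)g(x_t)\,d\mu(t)$ (in the first case $x_E$ is the image in $X^{**}$ of the honest Pettis integral). Since for fixed $g$ the set function $E\mapsto x_E(g)$ is the indefinite Lebesgue integral of an $L_1$ function, it is bounded; so for each $x$ the family $\{x_E:E\in\Sigma\}$ is $w^*$-bounded in $X^{**}$, and the uniform boundedness principle gives $\sup_{E\in\Sigma}\|x_E\|<\infty$.

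Next, fix $E\in\Sigma$ and consider the linear map $T_E\colon X\to X^{**}$, $T_Ex:=x_E$. For every $g\in X^*$ one has $x_E(g)=\int_E f_t(x)g(x_t)\,d\mu(t)=g_E(x)$ with $g_E\in X^*$ (from Lemma~\ref{L:DualFrame1} in the first case, from the hypothesis in the second), so the family $\{g_E:g\in X^*,\ \|g\|\le1\}\subseteq X^*$ is pointwise bounded on $X$: its value at $x$ is $\sup_{\|g\|\le1}|x_E(g)|=\|x_E\|<\infty$. The uniform boundedness principle then gives $\sup_{\|g\|\le1}\|g_E\|<\infty$, whence $\|T_Ex\|=\|x_E\|=\sup_{\|g\|\le1}|g_E(x)|\le\bigl(\sup_{\|g\|\le1}\|g_E\|\bigr)\|x\|$, so $T_E$ is bounded. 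Finally, by the first stage the family $\{T_E\}_{E\in\Sigma}\subseteq\mathcal L(X,X^{**})$ is pointwise bounded, so a last application of the uniform boundedness principle gives $B:=\sup_{E\in\Sigma}\|T_E\|<\infty$; then $\bigl|\int_E f_t(x)f(x_t)\,d\mu(t)\bigr|=|x_E(f)|\le\|T_E\|\,\|x\|\,\|f\|\le B\|x\|\,\|f\|$.

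The step I expect to be the main obstacle is showing that each single operator $T_E$ is bounded. A naive closed-graph argument stalls because there is no integrable dominating function for the integrands $t\mapsto f_t(x_n)g(x_t)$. The resolution is that two facts are available essentially for free and can be played off against each other through the uniform boundedness principle: the dual vector $f_E\in X^*$ (from Lemma~\ref{L:DualFrame1}, or the hypothesis) gives control ``in the $f$ slot'', while the bare $L_1$-integrability of the scalar functions gives, through the Dunford integral, the vector $x_E\in X^{**}$ and hence control ``in the $x$ slot''. Carrying the whole argument out in $X^{**}$ rather than $X$ is also precisely what lets the continuous* case through, since there $x_E$ genuinely can fail to be an element of $X$.
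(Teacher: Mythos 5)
Your overall strategy is workable, but as written the proof is circular in the continuous Schauder frame case: there you obtain the functionals $g_E\in X^*$ (the input needed for the uniform boundedness step that makes each single $T_E$ bounded) by citing Lemma~\ref{L:DualFrame1}, whereas in the paper Lemma~\ref{L:DualFrame1} is \emph{deduced from} Lemma~\ref{L:suppression}: its entire content in that direction is precisely that $x\mapsto\int_E f_t(x)f(x_t)\,d\mu(t)$ is bounded, which is the estimate you are trying to prove. So the ``free'' input you identify as resolving the main obstacle is not free. The ingredient that actually closes the gap is one you already use implicitly when you assert Dunford integrability: the closed graph theorem applied to a map into $L_1(\mu)$. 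For fixed $g\in X^*$ the linear map $S_g\colon X\to L_1(\mu)$, $x\mapsto\bigl(t\mapsto f_t(x)g(x_t)\bigr)$, has closed graph --- if $x_n\to x$ and $S_gx_n\to h$ in $L_1$ then a subsequence converges a.e.\ to $h$, while $S_gx_n\to S_gx$ pointwise everywhere, so $h=S_gx$ --- hence $S_g$ is bounded and $|g_E(x)|\le\|S_gx\|_{L_1}\le\|S_g\|\,\|x\|$ uniformly in $E$. This is exactly the step that a closed graph argument into $X$ or into the scalars misses but a closed graph argument into $L_1$ does not.

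Once you have this, your three-stage construction collapses to the paper's much shorter argument. The bilinear map $\Phi\colon X\times X^*\to L_1(\mu)$, $\Phi(x,f)=\bigl(t\mapsto f_t(x)f(x_t)\bigr)$, is well defined in both cases (Pettis or Pettis* integrability forces $t\mapsto f_t(x)f(x_t)$ into $L_1$, since its integrals over $\{f_t(x)f(x_t)>0\}$ and over $\{f_t(x)f(x_t)<0\}$ are both finite) and separately continuous by the closed graph argument above; a separately continuous bilinear map between Banach spaces is jointly continuous, so $\int_M|f_t(x)f(x_t)|\,d\mu\le B\|x\|\,\|f\|$, and the bound for every $E$ follows by dominating $\bigl|\int_E f_t(x)f(x_t)\,d\mu\bigr|$ by $\int_M|f_t(x)f(x_t)|\,d\mu$. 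This yields the stronger absolute bound directly, treats the two cases identically, and avoids any detour through $X^{**}$.
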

\begin{proof}
Suppose that either $(x_t,f_t)\in X\times X^*$ is a continuous Schauder frame of $X$ or $(f_t,x_t)\in X^*\times X$ is a continuous* Schauder frame for $X^*$. 
In either case, we have that  $|f(x)|=|\int f_t(x) f(x_t) d\mu(t)|\leq \int |f_t(x) f(x_t)| d\mu(t)<\infty$ for all $x\in X$ and $f\in X^*$. 
  Thus, the linear map, $(x,f)\mapsto f_t(x) f(x_t)$ is pointwise bounded from $X\times X^*$ to $L_1(\mu)$ and is hence uniformly bounded.  Thus, there exists $B>0$ such that 
$\int |f_t(x) f(x_t)| d\mu(t) \leq B \|x\|\|f\|$ for all $(x,f)\in X\times X^*$.  In particular, for all measurable $E$ we have, $|\int_E f_t(x) f(x_t) d\mu(t)|\leq  \int |f_t(x) f(x_t)| d\mu(t) \leq B \|x\|\|f\|$.
\end{proof}

Lemma \ref{L:suppression} gives that for every continuous Schauder frame $(x_t,f_t)\in X\times X^*$ there exists a constant $B_{s}>0$ such that for every measurable set $E$ we have that $\|x_E\|\leq B_s\|x\|$.  The least constant $B_s$ to satisfy Lemma \ref{L:suppression} is called the {\em suppression unconditionality constant} of $(x_t,f_t)_{t\in M}$.  Likewise, the {\em unconditionality constant} of $(x_t,f_t)_{t\in M}$ is the least constant $B_u$ to satisfy $\int |f_t(x) f(x_t)| d\mu(t)\leq B_u|\int f_t(x) f(x_t) d\mu(t)|$ for all $x\in X$ and $f\in X^*$.   Just like unconditional bases, these constants satisfy $B_s\leq B_u\leq 2B_s$.

As duality techniques are ubiquitous in the theory of Banach spaces, it will be important to determine the relationship between a map $t\mapsto (x_t,f_t) \in X\times X^*$ and the dual map $t\mapsto (f_t,x_t) \in X^*\times X$.  The following lemma states that if $(x_t,f_t)_{t\in M}$ is a continuous Schauder frame of $X$ then $(f_t,x_t)_{t\in M}$ is a continuous* Schauder frame of $X^*$.  In Section \ref{S:sbc} we will characterize when $(f_t,x_t)_{t\in M}$ is a continuous Schauder frame of $X^*$ and not just a  continuous* Schauder frame.

\begin{lem}\label{L:DualFrame1} Let $X$ be a separable Banach Space and let $t\mapsto(x_t,f_t)\in X\times X^*$ be a measurable map from a $\sigma$-finite measure space $M$ to $X\times X^*$.  If $(x_t, f_t)_{t \in M}$ is a continuous Schauder frame for $X$ then the \textit{dual frame} $(f_t, x_t)_{t \in M}$ is a continuous* Schauder Frame for $X^{*}$.
\end{lem}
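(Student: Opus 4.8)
The plan is to realize the ``partial-integral'' operators attached to the continuous Schauder frame as a family of bounded linear operators on $X$, and then simply to pass to their adjoints. Concretely, for each $E\in\Sigma$ the hypothesis together with the Pettis description \eqref{E:cSFP} produces, for every $x\in X$, a vector $x_E=x_E(x)\in X$ with
$$g(x_E)=\int_E f_t(x)\,g(x_t)\,d\mu(t)\qquad\text{for all }g\in X^*,$$
and with $x_M=x$. Since $X^*$ separates the points of $X$, the vector $x_E$ is uniquely determined by $x$; from this uniqueness and the linearity of each $f_t$ and of the Lebesgue integral, the map $S_E\colon x\mapsto x_E$ is linear, and Lemma \ref{L:suppression} (via the suppression unconditionality constant) gives $\|S_E x\|\le B_s\|x\|$, so $S_E\in\mathcal{L}(X)$ with $\|S_E\|\le B_s$. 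Note $S_M=\Id_X$.

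Next I would take adjoints. For $f\in X^*$ and $x\in X$,
$$(S_E^* f)(x)=f(S_E x)=f(x_E)=\int_E f_t(x)\,f(x_t)\,d\mu(t)=\int_E f(x_t)\,f_t(x)\,d\mu(t),$$
the last step being a harmless rearrangement of scalars. Measurability of the integrand $t\mapsto f(x_t)f_t(x)$ follows from measurability of $t\mapsto(x_t,f_t)$, and $\int_E|f(x_t)f_t(x)|\,d\mu(t)<\infty$ is Lemma \ref{L:suppression}; hence the displayed identity exhibits $S_E^*f\in X^*$ as the functional witnessing Pettis$^*$ integrability of $t\mapsto f(x_t)f_t$ over $E$, i.e. $\int_E^* f(x_t)f_t\,d\mu(t)=S_E^*f$. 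Specializing to $E=M$ yields $\int_M^* f(x_t)f_t\,d\mu(t)=S_M^*f=f$, which is precisely the assertion that $(f_t,x_t)_{t\in M}$ is a continuous$^*$ Schauder frame of $X^*$.

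There is no genuinely hard step here; the only points needing care are the well-definedness and linearity of $S_E$ (resting on the fact that $X^*$ separates points, so the Pettis integral is unique) and the verification that the scalar integrand $t\mapsto f(x_t)f_t(x)$ is measurable and integrable, so that the Pettis$^*$ integral is a legitimate object at all — both already in hand from the measurability hypothesis and Lemma \ref{L:suppression}. If anything is the ``main obstacle'' it is purely bookkeeping: one must produce the witnessing functional $f_E$ for \emph{every} $E\in\Sigma$ at once, and this is handled uniformly by the single family $\{S_E^*\}_{E\in\Sigma}$ rather than set by set.
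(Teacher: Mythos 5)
Your proposal is correct and is essentially the paper's own argument: the functional $S_E^*f$ you construct is exactly the bounded linear functional $x\mapsto\int_E f_t(x)f(x_t)\,d\mu(t)$ that the paper obtains directly from Lemma \ref{L:suppression}, and in both cases $f_M=f$ follows from the reconstruction formula (your $S_M=\Id_X$). The detour through the operators $S_E$ and their adjoints is only a repackaging, not a different route.
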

\begin{proof} 
We assume that $(x_t,f_t)_{t\in M}$ is a continuous Schauder frame of $X$.
Fix $f \in X^{*}$ and let $E$ be measurable. We have by Lemma \ref{L:suppression} that the map $x\mapsto \int_E f_t(x) f(x_t) d\mu(t)$ defines a bounded linear functional on $X$.  Thus, there exists $f_E\in X^*$ such that $f_E(x)=\int_E f_t(x) f(x_t) d\mu(t)$ for all $x\in X$.  Furthermore, $f(x)=\int f_t(x) f(x_t) d\mu(t)$ for all $x\in X$ and hence $f_M=f$ and $(f_t, x_t)_{t \in M}$ is a continuous* Schauder Frame for $X^{*}$.
\end{proof}

\section{Shrinking and boundedly complete continuous Schauder frames}\label{S:sbc}

The properties shrinking and boundedly complete give very nice structural results for Schauder bases.  The properties are extended to atomic decompositions and Schauder frames in \cite{CL}, \cite{CLS} and \cite{L}, and they prove that many of the fundamental James Theorems for bases extend to Schauder frames.  The goal for this section is to extend these results to continuous Schauder frames as well.  The natural numbers are used to index Schauder bases and Schauder frames, and so it is very easy to work with properties using limits.  Continuous frames however are indexed by arbitrary measure spaces, and so we will have to work with nets over a directed set instead.

\begin{defn} Given a $\sigma$-algebra $\Sigma$ and measure $\mu$, we introduce \[\mathcal{D} = \{ E \in \Sigma : \mu(E) < \infty\textrm{ and }\sup_{t \in A} \|f_t\|\|x_t\| < \infty \}\] We make $\mathcal{D}$ a directed set by defining $E \preceq F$ whenever $E \subseteq F$, and we refer to the elements of $\mathcal{D}$ as \textit{extra finite}.
\end{defn}

For $E\in\Sigma$, we define bounded operators $P_E,T_E:X\rightarrow X$ by for all $x\in E$
$$P_E(x)=\int_E f_t(x) x_t d\mu(t)\quad\textrm{ and }\quad T_E(x)=\int_{M\setminus E} f_t(x) x_t d\mu(t).
$$
We consider $P_E$ as a generalization of the basis projection for a Schauder basis and $T_E$ as the tail projection.  However, in the setting of Schauder frames and continuous Schauder frames these operators are no longer projections.

\begin{defn}\label{D:S} A continuous Schauder Frame $(x_t, f_t)_{t \in M}$ for $X$ is called \textit{shrinking} if $\lim_{E \in \mathcal{D}} \|T^*_{ E}f\| = 0$ for all $f \in X^{*}$.
\end{defn}

\begin{lem}\label{L:uncS}
Let $(M,\Sigma,\mu)$ be a sigma finite measure space and let $(x_w,f_w)_{w\in M}$ be a continuous Schauder frame of a Banach space $X$.  If $(x_w,f_w)_{w\in M}$ is shrinking then for all $H\in \Sigma$ we have that
$$\lim_{E \in \mathcal{D}} \|T^*_{H \cup E}f\| = 0\quad\textrm{ for all }f \in X^{*}.$$

\end{lem}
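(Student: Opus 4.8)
The plan is to fix $f\in X^{*}$ and prove that the net $\bigl(T^{*}_{H\cup E}f\bigr)_{E\in\mathcal{D}}$ converges to $0$ in norm by establishing two facts: it is norm-Cauchy, hence converges to some $g\in X^{*}$; and it converges pointwise on $X$ to $0$, which forces $g=0$. One cannot simply apply Definition \ref{D:S} to the sets $H\cup E$, since $H\cup E$ need not be extra finite (for instance when $\mu(H)=\infty$), so some routing is required; this is the source of the extra work.

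I would first record the bookkeeping. For every $S\in\Sigma$ the operator $P_S$ is well defined and bounded (Lemma \ref{L:suppression}), one has $P_S+T_S=\Id$, and $P$ is additive over disjoint measurable sets; hence for $A\subseteq B$ one has $T_A=T_B+P_{B\setminus A}$, and for disjoint $G,E$ one has $P_G=T_E-T_{G\cup E}$. Moreover, since $\mu$ is $\sigma$-finite and every $x_t,f_t$ has finite norm, the sets $S_n:=M_n\cap\{t:\|f_t\|\|x_t\|\le n\}$, where $M_n\uparrow M$ with $\mu(M_n)<\infty$, all lie in $\mathcal{D}$ and increase to $M$; consequently, for any $g\in L_1(\mu)$, dominated convergence gives $\int_{M\setminus E}|g|\,d\mu\to 0$ along $E\in\mathcal{D}$.

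For the Cauchy step, given $\varepsilon>0$ I would use Definition \ref{D:S} to choose $E_0\in\mathcal{D}$ with $\|T^{*}_S f\|<\varepsilon/4$ for every $S\in\mathcal{D}$ with $S\supseteq E_0$. For $E_1,E_2\in\mathcal{D}$ both containing $E_0$, set $E'=E_1\cup E_2\in\mathcal{D}$ and $G_i=(E'\setminus E_i)\setminus H$; then each $G_i\subseteq E'$ is extra finite, $G_i\cap E_i=\emptyset$, and $(H\cup E')\setminus(H\cup E_i)=G_i$, so the identities above yield
\[T^{*}_{H\cup E_i}f-T^{*}_{H\cup E'}f=P^{*}_{G_i}f=T^{*}_{E_i}f-T^{*}_{G_i\cup E_i}f,\]
which has norm $<\varepsilon/2$ because $E_i$ and $G_i\cup E_i$ both lie in $\mathcal{D}$ and contain $E_0$. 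The triangle inequality through $E'$ gives $\|T^{*}_{H\cup E_1}f-T^{*}_{H\cup E_2}f\|<\varepsilon$, so the net is Cauchy and, $X^{*}$ being complete, converges in norm to some $g\in X^{*}$.

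For the pointwise step, fix $x\in X$ and put $g_t=f_t(x)f(x_t)$, which lies in $L_1(\mu)$ with $\int_M g_t\,d\mu=f(x)$ by Lemma \ref{L:suppression} and the frame identity. Splitting $H\cup E=H\sqcup(E\setminus H)$ gives $\langle T^{*}_{H\cup E}f,x\rangle=f(x)-\int_H g_t\,d\mu-\int_{E\setminus H}g_t\,d\mu$, and since $\bigl|\int_{M\setminus H}g_t\,d\mu-\int_{E\setminus H}g_t\,d\mu\bigr|\le\int_{M\setminus E}|g_t|\,d\mu\to 0$, we get $\langle T^{*}_{H\cup E}f,x\rangle\to f(x)-\int_M g_t\,d\mu=0$. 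Since norm convergence implies weak convergence, the limit $g$ from the previous step satisfies $\langle g,x\rangle=0$ for all $x$, so $g=0$, and hence $\|T^{*}_{H\cup E}f\|\to 0$; as $f$ was arbitrary, this is the claim. I expect the Cauchy step to be the main obstacle: because $H\cup E\notin\mathcal{D}$ one must pass the tail operators through the auxiliary extra-finite sets $G_i\cup E_i$ to bring Definition \ref{D:S} to bear, and only combining this with the pointwise computation pins the limit down to $0$.
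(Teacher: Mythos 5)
Your proof is correct, but it takes a genuinely different route from the paper's. The paper fixes $E_0\in\mathcal{D}$ from the shrinking hypothesis and then, for each $x\in B_X$, splits $M$ into the sets $F^+$ and $F^-$ where $f_t(x)f(x_t)$ is nonnegative and negative, exhausts each by an increasing sequence of extra finite sets, and applies the shrinking bound to $E_0\cup F_n^{\pm}$ to obtain the uniform absolute estimate $\int_{E_0^c}|f_t(x)f(x_t)|\,d\mu\le 2\vp$ for all $x\in B_X$; this single estimate then dominates $\|T^*_{H\cup E}f\|$ for every $H\in\Sigma$ and every $E\in\mathcal{D}$ with $E\supseteq E_0$ at once. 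You instead show that the net $(T^*_{H\cup E}f)_{E\in\mathcal{D}}$ is norm-Cauchy --- the set-algebra reduction of $T^*_{H\cup E_i}f-T^*_{H\cup E'}f$ to $P^*_{G_i}f=T^*_{E_i}f-T^*_{G_i\cup E_i}f$, with $G_i\cup E_i$ an extra finite set containing $E_0$, is the key move, and it is valid because $\mathcal{D}$ is closed under measurable subsets and finite unions --- and you then pin the norm limit down to $0$ by the separate weak$^*$ computation using $\int_{M\setminus E}|f_t(x)f(x_t)|\,d\mu\to 0$ along $\mathcal{D}$. Both arguments are sound, and in both the choice of $E_0$ depends only on $f$ and $\vp$, not on $H$. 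What the paper's approach buys is the stronger intermediate fact that shrinking self-improves to an absolute $L_1$-tail bound uniform over the unit ball (the ``unconditional'' form of the tail condition), from which the conclusion is immediate; what yours buys is the avoidance of the sign-splitting and monotone-exhaustion step, making the identification of the limit mechanical, at the modest cost of invoking completeness of $X^*$ and the Cauchy-net argument.
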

\begin{proof}
Let $f\in X^*$ and $\vp>0$.  Assume that $(x_w,f_w)_{w\in M}$ is shrinking.  Thus, there exists $E_0\in \mathcal{D}$ such that $\|T^*_E f\|<\vp$ for all $E\in \mathcal{D}$ with $E_0\subseteq E$. 
Let $x\in B_X$.  We consider the two sets 
$$F^+=\{t\in M \,:\, f_t(x) f(x_t)\geq 0\}\quad\textrm{ and }\quad F^-=\{t\in M\,:\, f_t(x) f(x_t)< 0\}.$$
Choose increasing sequences $(F^+_n)_{n\in\N},(F^-_n)_{n\in\N}\subseteq \mathcal{D}$ such that $\cup F^+_n=F^+$ and $\cup F^-_n=F^-$.  We now have the following estimate for each $x\in B_X$.

\begin{align*}
\int_{E_0^c} |f_t(x) f(x_t)| d\mu&=\int_{(E_0\cup F^-)^c} f_t(x) f(x_t) d\mu-\int_{(E_0\cup F^+)^c} f_t(x) f(x_t) d\mu\\
&=\lim_{n\rightarrow \infty}\int_{(E_0\cup F_n^-)^c} f_t(x) f(x_t) d\mu-\int_{(E_0\cup F_n^+)^c} f_t(x) f(x_t) d\mu\\
&\leq \lim_{n\rightarrow \infty}\|T^*_{E_0\cup F_n^-} f\|+ \|T^*_{E_0\cup F_n^+}f\|\leq   2\vp
\end{align*}

Thus, for all $x\in B_X$ we have that $\int_{E_0^c} |f_t(x) f(x_t)| d\mu<2\vp$.   Let $H\in M$ and $E\in\mathcal{D}$ with $E_0\subseteq E$.  
$$\|T^*_{E\cup H}f\|=\sup_{x\in B_X}\left|\int_{E^c\cap H^c} f_t(x)f(x_t)d\mu\right|\leq \sup_{x\in B_X}\int_{E_0^c} |f_t(x)f(x_t)|d\mu\leq 2\vp
$$
Thus, $\lim_{E \in \mathcal{D}} \|T^*_{H \cup E}f\| = 0$. 
\end{proof}

A Schauder basis for a Banach space is shrinking if and only if its biorthogonal functionals form a Schauder basis for the dual.  The following theorem proves this for continuous Schauder frames, which essentially means that we have the correct definition for what it means for a continuous Schauder frame to be shrinking.

\begin{thm}\label{T:AltShrink} Let  $(x_t, f_t)_{t \in M}$ be a continuous Schauder frame for a Banach space $X$. Then,  $(x_t, f_t)_{t \in M}$ is shrinking if and only if its dual frame $(f_t, x_t)_{t \in M} \subseteq X^{*} \times X^{**}$ is a continuous Schauder Frame for $X^{*}$. 
\end{thm}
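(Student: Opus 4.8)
The plan is to identify precisely the gap between the two notions and then close it in each direction. By Lemma \ref{L:DualFrame1} we already know that for every $f\in X^{*}$ and every $E\in\Sigma$ there is an element $P_{E}^{*}f\in X^{*}$ with $(P_{E}^{*}f)(x)=\int_{E}f_{t}(x)f(x_{t})\,d\mu$ for all $x\in X$, and $P_{M}^{*}f=f$. The dual frame $(f_{t},x_{t})_{t\in M}$ is a continuous Schauder frame for $X^{*}$ exactly when, \emph{in addition}, $\langle\Phi,P_{E}^{*}f\rangle=\int_{E}\Phi(f_{t})f(x_{t})\,d\mu$ for every $\Phi\in X^{**}$ and every $E\in\Sigma$. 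So both implications concern the interplay between this ``bidual testing'' identity and the norm convergence $\lim_{E\in\mathcal{D}}\|T_{E}^{*}f\|=0$.

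For the forward implication, assume $(x_{t},f_{t})_{t\in M}$ is shrinking. I would argue in four steps. (i) For each $\Phi\in X^{**}$ the function $t\mapsto\Phi(f_{t})f(x_{t})$ lies in $L_{1}(\mu)$, with norm a constant multiple of $\|f\|\,\|\Phi\|$: since $X$ is separable, $t\mapsto f_{t}$ is essentially separably valued, so (by the bounded form of Goldstine's theorem and weak$^{*}$-metrizability of bounded sets in the dual of a separable space) $\Phi$ is the weak$^{*}$-limit, on the closed span of the essential range of $t\mapsto f_{t}$, of a norm-bounded \emph{sequence} $(x_{n})$ in $X$; then $f_{t}(x_{n})\to\Phi(f_{t})$ a.e.\ and Fatou's lemma with Lemma \ref{L:suppression} gives the claim. (ii) For an extra finite set $F$ one has $\int_{F}\|f_{t}\|\,|f(x_{t})|\,d\mu\le\|f\|\,\mu(F)\sup_{t\in F}\|f_{t}\|\,\|x_{t}\|<\infty$, so $t\mapsto f_{t}$ is Bochner integrable against the finite signed measure $f(x_{t})\,d\mu$ on $F$; the resulting Bochner integral coincides with $P_{F}^{*}f$, and since a Bochner integral commutes with every $\Phi\in X^{**}$ and with every evaluation at $x\in X$, the bidual testing identity holds for all extra finite $F$. (iii) Fix an increasing sequence $F_{n}\uparrow M$ in $\mathcal{D}$ (possible since $\|f_{t}\|\,\|x_{t}\|<\infty$ pointwise and $\mu$ is $\sigma$-finite). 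Using the uniform tail estimate obtained in the proof of Lemma \ref{L:uncS} ($\sup_{x\in B_{X}}\int_{M\setminus E_{0}}|f_{t}(x)f(x_{t})|\,d\mu<\varepsilon$ for a suitable $E_{0}\in\mathcal{D}$), together with the boundedness of $|f_{t}(x)f(x_{t})|$ on the extra finite set $E_{0}$ and $\mu(E_{0}\setminus F_{n})\to0$, one gets $\|P_{E}^{*}f-P_{E\cap F_{n}}^{*}f\|=\|P_{E\setminus F_{n}}^{*}f\|\to0$. (iv) Finally, for $\Phi\in X^{**}$ and $E\in\Sigma$, dominated convergence along $(F_{n})$ — with dominating function from step (i) — gives $\int_{E}\Phi(f_{t})f(x_{t})\,d\mu=\lim_{n}\int_{E\cap F_{n}}\Phi(f_{t})f(x_{t})\,d\mu=\lim_{n}\langle\Phi,P_{E\cap F_{n}}^{*}f\rangle=\langle\Phi,P_{E}^{*}f\rangle$, so $(f_{t},x_{t})_{t\in M}$ is a continuous Schauder frame for $X^{*}$.

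For the converse, assume $(f_{t},x_{t})_{t\in M}$ is a continuous Schauder frame for $X^{*}$ and fix $f\in X^{*}$. For pairwise disjoint $(E_{n})\subseteq\Sigma$, countable additivity of the Lebesgue integral (legitimate since each $t\mapsto\Phi(f_{t})f(x_{t})$ lies in $L_{1}(\mu)$) shows $\sum_{n}P_{E_{n}}^{*}f$ converges to $P_{\bigcup_{n}E_{n}}^{*}f$ in $\sigma(X^{*},X^{**})$, and likewise for every subseries, so by the Orlicz--Pettis theorem the series converges in norm. Thus $E\mapsto P_{E}^{*}f$ is a norm countably additive $X^{*}$-valued vector measure; in particular it is strongly additive ($\|P_{G_{n}}^{*}f\|\to0$ for pairwise disjoint $G_{n}$) and continuous along increasing sequences of sets. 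If $(x_{t},f_{t})$ were not shrinking there would be $\varepsilon>0$ and a cofinal family of $E\in\mathcal{D}$ with $\|T_{E}^{*}f\|=\|P_{M\setminus E}^{*}f\|\ge\varepsilon$; one then builds pairwise disjoint extra finite sets $G_{k}$ with $\|P_{G_{k}}^{*}f\|\ge\varepsilon/2$ — given $G_{1},\dots,G_{k}$, choose $E\supseteq G_{1}\cup\cdots\cup G_{k}$ in $\mathcal{D}$ with $\|P_{M\setminus E}^{*}f\|\ge\varepsilon$, exhaust $M\setminus E$ by extra finite sets $H_{m}\uparrow M\setminus E$, use monotone continuity to pick $H_{m}$ with $\|P_{H_{m}}^{*}f\|\ge\varepsilon/2$, and put $G_{k+1}=H_{m}$ (disjoint from $G_{1},\dots,G_{k}$). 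This contradicts strong additivity, so $(x_{t},f_{t})$ is shrinking.

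I expect the forward implication to be the main obstacle. Its heart is that Lemma \ref{L:DualFrame1} only yields the integral identity tested against $X$, and promoting it to a test against all of $X^{**}$ cannot be done by a naive dominated-convergence argument over the net $\mathcal{D}$ (dominated convergence fails for nets). The remedy — localize to extra finite sets, where honest Bochner integration applies, and then transport the identity back to arbitrary measurable sets using the \emph{uniform} tail control that shrinkingness supplies via Lemma \ref{L:uncS} — is the technical core; the global integrability of $t\mapsto\Phi(f_{t})f(x_{t})$ in step (i), which genuinely uses separability of $X$, is the other delicate point.
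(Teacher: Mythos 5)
Your proof is correct and arrives at the same structural reduction as the paper --- the dual frame is a continuous Schauder frame for $X^{*}$ precisely when the functional $f_E$ supplied by Lemma \ref{L:DualFrame1} satisfies $\Phi(f_E)=\int_E \Phi(f_t)f(x_t)\,d\mu$ for all $\Phi\in X^{**}$, not merely for $\Phi\in X$ --- but you close that gap by genuinely different means in both directions. For the forward implication the paper fixes a bounded sequence $(y_n)\subseteq X$ converging weak$^{*}$ to $x^{**}$ and controls a double limit $\lim_k\lim_n\int_{H\cap U_k^C}f(x_t)f_t(y_n)\,d\mu$ via Lemma \ref{L:uncS}; you instead get the bidual identity for free on extra finite sets (where the integral is an honest Bochner integral) and transport it to arbitrary $E\in\Sigma$ using the norm convergence $P^{*}_{E\cap F_n}f\to P^{*}_{E}f$, which is exactly the uniform tail estimate hiding inside the proof of Lemma \ref{L:uncS}. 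The divergence is larger in the converse: where the paper extracts a quantitative lower bound on $\|T^{*}_{H\cup U_k\cup E}f\|$ from the failure of the bidual identity, you observe that if the dual frame is a continuous Schauder frame then $E\mapsto f_E$ is weakly countably additive on every subseries, hence norm countably additive by Orlicz--Pettis, hence strongly additive, and strong additivity is incompatible with the disjoint extra finite sets $G_k$ satisfying $\|P^{*}_{G_k}f\|\ge\varepsilon/2$ that non-shrinkingness produces. This is cleaner and more conceptual than the paper's estimate, and your step (i) additionally proves the integrability of $t\mapsto\Phi(f_t)f(x_t)$, which the paper asserts without argument. One small correction: the essential separable-valuedness of $t\mapsto f_t$ invoked in your steps (i) and (ii) does not follow from separability of $X$ (the values lie in $X^{*}$, which need not be separable); it should instead be read off from the strong measurability of the frame map --- an assumption the paper itself uses implicitly when it produces the weak$^{*}$-convergent sequence $(y_n)$.
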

\begin{proof}
Let $f\in X^*$.  By Lemma \ref{L:DualFrame1}, $(f_t, x_t)_{t \in M}$ is a continuous* Schauder frame for $X$.  Thus, there exists $f_H\in X^*$ with $f_H(x)=\int_H f_t(x) f(x_t) d\mu(t)$ for all $x\in X$ and all measurable $H\subseteq M$, and $f_M=f$.  We now need to prove that $(x_t, f_t)_{t \in M}$ is shrinking if and only if $x^{**}(f_H)=\int_H x^{**}(f_t) f(x_t) d\mu(t)$ for all $x^{**}\in X^{**}$ and all measurable $H\subseteq M$.

$M$ is $\sigma$-finite so we can find a monotone sequence $(V_i)_{i = 1}^\infty$ of finite measure sets such that $M = \bigcup_{i = 1}^\infty V_i$. We also define $W_i = \{ t \in M : \norm{f_t} \norm{x_t} \leq i \}$ and note that $(W_i)_{i = 1}^\infty$ is monotone with $M = \bigcup_{i = 1}^\infty W_i$. Now let $U_i = V_i \cap W_i$. We see that $(U_i)_{i = 1}^\infty \subseteq \mathcal{D}$ is monotone and $M = \bigcup_{i = 1}^\infty U_i$.

We now fix $x^{**} \in X^{**}$  and choose $(y_n)_{n = 1}^\infty \subseteq X$ converging weak$^{*}$ to $x^{**}$ such that $\|y_n\|= \|x^{**}\|$ for all $n\in\N$.  In particular, we have for all $n,k\in\N$ that $|x_t(f) y_n(f_t)|\leq k^2\|f\|\|x^{**}\|$ for all $t\in M$.  Thus, the function $t\mapsto x_t(f) y_n(f_t)$ is bounded by the constant $k^2\|f\|\|x^{**}\|$ on the finite measure space $U_k$.  Let $H\in \Sigma$.  For each $k\in \N$, we have that

\begin{align*} x^{**}(f_H) =& \lim_{n \rightarrow \infty} f_H(y_n) \\
 =& \lim_{n \rightarrow \infty} \int_H f(x_t) f_t(y_n) d\mu  \\
 =& \lim_{n \rightarrow \infty}\int_{H \cap U_k} f(x_t) f_t(y_n) d\mu + \lim_{n \rightarrow \infty} \int_{H \cap U_k^C} f(x_t) f_t(y_n) d\mu \\
 =&  \int_{H \cap U_k} f(x_t) x^{**}(f_t) d\mu + \lim_{n \rightarrow \infty} \int_{H \cap U_k^C} f(x_t) f_t(y_n) d\mu \quad \textrm{by dominated convergence.} \end{align*} 
Now as $x_t(f) x^{**}(f_t)\in L_1(H)$ we have that,

 \begin{align*}
  x^{**}(f_H)  =& \lim_{k\rightarrow\infty}  \int_{H \cap U_k} f(x_t) x^{**}(f_t) d\mu +\lim_{k\rightarrow\infty} \lim_{n \rightarrow \infty} \int_{H \cap U_k^C} f(x_t) f_t(y_n) d\mu \\
=& \int_{H} f(x_t) x^{**}(f_t) d\mu +\lim_{k\rightarrow\infty} \lim_{n \rightarrow \infty} \int_{H \cap U_k^C} f(x_t) f_t(y_n) d\mu \quad \textrm{by dominated convergence.} 
 \end{align*}

Thus, we have that $(f_t, x_t)_{t \in M}$ is a continuous Schauder Frame for $X^{*}$ if and only if $\lim_{k \rightarrow \infty}\lim_{n \rightarrow \infty} \int_{H  \cap U_k^C} f(x_t) f_t(y_n) d\mu = 0$ for every $f \in X^{*}$, $H \in \Sigma$, and $w$-Cauchy $(y_n)_{n=1}^\infty$ in $X$. 

We first assume that $(x_w,f_w)_{w\in M}$ is shrinking.  Let $\varepsilon > 0$, $f \in X^{*}$ with $\|f\|=1$, and $H \in \Sigma$. By Lemma \ref{L:uncS}, there exists $E \in \mathcal{D}$ such that for all $F \succeq E$ we have that $\norm{T^*_{H^C \cup F}f} \leq \varepsilon$. This $E$ is extra finite and so let $A = \sup_{t \in E} \norm{f_t} \norm{x_t} < \infty$. As $E$ has finite measure and $M=\cup_{j=1}^\infty U_j$, there exists $k \in \mathbb{N}$ so that  $\mu(H \cap U_k^C \cap E) < \frac{\varepsilon}{A}$.  Then:

 \begin{align*}\lim_{n \rightarrow \infty} &\left |\int_{H \cap U_k^C} f(x_t) f_t(y_n) d\mu \right| \leq \|x^{**}\|\sup_{\norm{x} = 1} \left|\int_{H \cap U_k^C} f(x_t) f_t(x) d\mu\right| \\
  & \leq \|x^{**}\|\sup_{\norm{x} = 1} \left|\int_{H \cap U_k^C \cap E^C} x_t(f) f_t(x) d\mu\right| + \|x^{**}\|\sup_{\norm{x} = 1} \left|\int_{H \cap U_k^C \cap E} x_t(f) f_t(x) d\mu\right| \\
   &\leq\|x^{**}\| \norm{T^*_{H^C \cup U_k \cup E}f} + \|x^{**}\| \mu(H \cap U_k^C \cap E) A \\
    &\leq \|x^{**}\| \varepsilon+\|x^{**}\|\varepsilon 
    \end{align*}
    
     Thus, $\lim_{k \rightarrow \infty}\lim_{n \rightarrow \infty} \int_{H  \cap U_k^C} x_t(f) y_n(f_t) d\mu = 0$ and thus $(f_t, x_t)_{t \in M}$ is a continuous Schauder Frame for $X^{*}$.

We now assume that  $(f_t, x_t)_{t \in M}$ is not a continuous Schauder frame of $X^*$.  Thus, there exists $f \in X^{*}$, $H \in \Sigma$, $(m_k)_{k=1}^\infty\in[\N]^\omega$, $\vp>0$, and a $w^*$-convergent sequence $y_n\rightarrow x^{**}$ with $\|y_n\|=x^{**}$ for all $n\in\N$ such that $\lim_{n \rightarrow \infty} \int_{H  \cap U_{m_k}^C} x_t(f) y_n(f_t) d\mu >\vp$ for all $k\in\N$.   

Let $E\in \mathcal{D}$.  let $A = \sup_{t \in E} \norm{f_t} \norm{x_t} < \infty$. As $E$ has finite measure and $M=\cup_{j=1}^\infty U_j$, there exists $k \in \mathbb{N}$ so that  $\mu(H \cap U_{m_k}^C \cap E) < \frac{\varepsilon}{2A\|f\|\|x^{**}\|}$.  Then:

\begin{align*}
\|T^*_{H\cup U_k\cup E}f\|&=\sup_{\|x\|=1} \int_{H^c\cap U_{m_k}^c \cap E^c} f_t(x) f(x_t) d\mu\\
&\geq \lim_{n\rightarrow\infty} \frac{1}{\|x^{**}\|} \int_{H^c\cap U_{m_k}^c \cap E^c} f_t(y_n) f(x_t) d\mu\\
&= \lim_{n\rightarrow\infty} \frac{1}{\|x^{**}\|}\left( \int_{H^c\cap U_{m_k}^c} f_t(y_n) f(x_t) d\mu- \int_{H^c\cap U_{m_k}^c \cap E} f_t(y_n) f(x_t) d\mu\right)\\
&\geq \lim_{n\rightarrow\infty} \frac{1}{\|x^{**}\|}\left( \int_{H^c\cap U_{m_k}^c} f_t(y_n) f(x_t) d\mu- A\|x^{**}\|\|f\|\mu(H^c\cap U_{m_k}^c \cap E)\right)\\
&\geq \lim_{n\rightarrow\infty} \frac{1}{\|x^{**}\|}\left( \int_{H^c\cap U_{m_k}^c} f_t(y_n) f(x_t) d\mu- \vp/2\right)\\
&\geq \frac{1}{\|x^{**}\|}\frac{\vp}{2}
\end{align*}
Thus, $(x_w,f_w)_{w\in M}$ is not shrinking by Lemma \ref{L:uncS} as $E\in\mathcal{D}$ is arbitrary and $E\preceq U_k\cup E$.  

\end{proof}

\begin{thm}\label{T:l1}Let  $(x_t, f_t)_{t \in M}$ be a continuous Schauder frame for a Banach space $X$.  Then  $(x_t, f_t)_{t \in M}$ is shrinking if and only if  $\ell_1$ does not embed into $X$.
\end{thm}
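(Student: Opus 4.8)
The two implications are rather different in character. For ``shrinking $\To\ell_1\not\hookrightarrow X$'' I would invoke the duality already in hand: if $(x_t,f_t)_{t\in M}$ is shrinking, then by Theorem~\ref{T:AltShrink} its dual frame $(f_t,x_t)_{t\in M}$ is a continuous Schauder frame of $X^*$, and since a continuous Schauder frame exists only for a separable Banach space, $X^*$ is separable; a space containing an isomorphic copy of $\ell_1$ has nonseparable dual, so $\ell_1\not\hookrightarrow X$. (If one prefers not to lean on the separability clause in the definition, Theorem~\ref{T:AltShrink} still gives $f=\lim_{E\in\mathcal{D}}\int^*_E f(x_t)f_t\,d\mu(t)$ in norm, so every $f\in X^*$ lies in the closed span of the essentially separable range $\{f_t:t\in M\}$, whence $X^*$ is separable anyway.)

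For the converse I would argue by contradiction, assuming both that $\ell_1\not\hookrightarrow X$ and that $(x_t,f_t)_{t\in M}$ is not shrinking. Non-shrinkingness supplies $f\in X^*$ and $\vp>0$ for which $\{E\in\mathcal{D}:\norm{T^*_Ef}\ge\vp\}$ is cofinal in $\mathcal{D}$. Since $\mathcal{D}$ contains a monotone sequence with union $M$ (the sets $V_k\cap W_k$ from the proof of Theorem~\ref{T:AltShrink}) and finite unions of extra finite sets are extra finite, I can choose a monotone $(G_k)_{k=1}^\infty\subseteq\mathcal{D}$ with $G_k\supseteq V_k\cap W_k$ (so $\bigcup_kG_k=M$) and $\norm{T^*_{G_k}f}\ge\vp$ for every $k$, and then pick $z_k\in B_X$ with $\int_{M\setminus G_k}|f_t(z_k)f(x_t)|\,d\mu(t)\ge\vp/2$ for all $k$.

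The heart of the matter is to transport these coefficients into $L_1(\mu)$. The map $\Phi\colon X\to L_1(\mu)$, $\Phi x=\bigl(t\mapsto f_t(x)f(x_t)\bigr)$, is bounded with $\norm{\Phi x}_{L_1}\le B_u\norm{f}\norm{x}$ by the unconditionality inequality of Lemma~\ref{L:suppression}. Because $\ell_1\not\hookrightarrow X$, Rosenthal's $\ell_1$ theorem lets me pass to a subsequence (carrying along the matching subsequence of $(G_k)$) along which $(z_k)$ is weakly Cauchy; then $(\Phi z_k)$ is weakly Cauchy in $L_1(\mu)$, and since $L_1(\mu)$ is weakly sequentially complete the set $\{\Phi z_k:k\in\N\}$ is relatively weakly compact. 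By the Dunford--Pettis theorem it is therefore uniformly integrable and uniformly tight: for every $\eta>0$ there are a finite-measure set $E_0$ with $\sup_k\int_{M\setminus E_0}|\Phi z_k|\,d\mu<\eta$ and some $\rho>0$ with $\sup_k\int_A|\Phi z_k|\,d\mu<\eta$ whenever $\mu(A)<\rho$. Replacing $E_0$ by $E_0\cap W_m$ for large $m$ (the residual set $E_0\setminus W_m$ has small measure, hence small $\Phi z_k$-mass uniformly in $k$) I may take $E_0$ extra finite while keeping $\sup_k\int_{M\setminus E_0}|\Phi z_k|\,d\mu<\eta$, and since $\mu(E_0\setminus G_k)\to0$ uniform integrability also gives $\int_{E_0\setminus G_k}|\Phi z_k|\,d\mu\to0$. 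Taking $\eta<\vp/4$ and $k$ large, the decomposition $M\setminus G_k=(E_0\setminus G_k)\cup(M\setminus E_0)$ forces $\int_{M\setminus G_k}|f_t(z_k)f(x_t)|\,d\mu(t)<\vp/2$, contradicting the choice of $z_k$. Hence the frame is shrinking whenever $\ell_1\not\hookrightarrow X$.

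The step I expect to be the genuine obstacle is choosing the right compactness engine and seeing why the naive approach fails. One's instinct is to build an $\ell_1$-sequence in $X$ directly from the witnesses by a gliding-hump argument; but for a Schauder frame, continuous or discrete, a vector $\int_E f_t(x)x_t\,d\mu$ need not have its own frame coefficients supported on $E$ — the operators $P_E$ are not projections — so successive humps cannot be localized against disjoint index sets and the usual unconditional-basis estimate never gets going. Routing everything through $L_1(\mu)$ circumvents this: weak sequential completeness of $L_1$ upgrades ``$X$ contains no $\ell_1$'' (via Rosenthal) to genuine relative weak compactness of the coefficient images $\Phi z_k$, i.e.\ uniform integrability, which is exactly the negation of the uniform escape of mass to the tail that witnesses non-shrinkingness. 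Everything else — moving between sets of finite measure and extra finite sets via the exhaustions $(V_k)$ and $(W_m)$, and keeping the subsequences of $(z_k)$ and $(G_k)$ synchronized — is bookkeeping.
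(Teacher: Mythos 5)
Your proof is correct, and while the forward implication coincides with the paper's (shrinking gives, via Theorem \ref{T:AltShrink}, that $(f_t,x_t)_{t\in M}$ is a continuous Schauder frame of $X^*$, so $X^*$ is separable and cannot contain the nonseparable dual of a copy of $\ell_1$), your converse takes a genuinely different route. The paper argues the contrapositive in the other direction: assuming the frame is not shrinking, it runs a gliding-hump induction producing extra finite sets $U_1\subseteq V_1\subseteq U_2\subseteq\cdots$ and vectors $y_n$ (differences $z_2-z_1$ of near-norming witnesses, chosen so that the cross terms $P^*_{V_j\setminus U_j}f(y_n)$, $j\neq n$, are summably small) and verifies by hand that $(y_n)$ dominates the unit vector basis of $\ell_1$. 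You instead assume $\ell_1\not\hookrightarrow X$, apply Rosenthal's theorem to the witnesses $z_k$, transport them into $L_1(\mu)$ by the bounded map $\Phi x=(t\mapsto f_t(x)f(x_t))$ from Lemma \ref{L:suppression}, and use weak sequential completeness of $L_1$ together with the Dunford--Pettis theorem to upgrade weak Cauchyness of $(\Phi z_k)$ to uniform integrability and tightness, which is exactly incompatible with a fixed amount $\vp/2$ of coefficient mass escaping to the tails $M\setminus G_k$; the bookkeeping with the exhaustion $(U_k)$ is handled correctly. Your argument is shorter and isolates the conceptual point (non-shrinkingness is precisely a uniform failure of tightness of the coefficient functions), at the price of invoking two substantial external theorems; the paper's construction is elementary and explicitly exhibits an $\ell_1$-sequence. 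One quibble with your closing commentary: the direct gliding hump does not in fact fail. You are right that frame ``supports'' cannot be localized because the $P_E$ are not projections, but the paper circumvents this not by localizing supports but by stabilizing the finitely many cross terms $P^*_{V_j\setminus U_j}f(\cdot)$ at each inductive step, which is enough to run the usual lower $\ell_1$-estimate.
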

\begin{proof}

We first assume that $(x_t, f_t)_{t \in M}$ is shrinking.  Thus, $(f_t, x_t)_{t \in M}$ is a continuous Schauder frame for $X^*$ by Theorem \ref{T:AltShrink}.  In particular, $X^*$ must be separable.  However, the dual of $\ell_1$ is not separable, so $\ell_1$ cannot embed into $X$.

We now assume that $(x_t, f_t)_{t \in M}$ is not shrinking.
 There exists $f \in X^{*}$ with $\|f\|=1$ and $A>0$ such that, for every $E \in \mathcal{D}$ there is some $F \succeq E$ such that $\norm{T^*_{F} f} > A$.  Let $B$ be the suppression unconditionality constant of  $(x_t, f_t)_{t \in M}$. 
 
 We will create by induction a sequence of extra finite sets $U_1 \subseteq V_1\subseteq U_2\subseteq V_2...$ and vectors $(y_n)_{n=1}^\infty$ such that for all $n\in\N$,
 
 \begin{enumerate}
 \item $\|y_n\|\leq 2$,
 \item $P^*_{V_n\setminus U_n} f(y_n)> A/2$,
 \item $|P^*_{V_j\setminus U_j} f(y_n)|<\frac{A}{9nB}$ for all $j<n$,
 \item $|P^*_{V_n\setminus U_n} f(y_j)|<\frac{A}{92^{n}B}$ for all $j<n$.
 \end{enumerate}
 
 We first choose $U_1=\emptyset$. There exists $E\in\mathcal{D}$ such that $\|T_E^* f\|>A$.  Let $y_1\in X$ be a unit vector such that $T_E^*f(y_1)>A$. We choose $V_1\subseteq E^c$ such that $P_{V_1}f(y_1)>A$.
 
 We now assume that $n\in\N$ and that $y_1,...y_n$ and $U_1 \subseteq V_1...\subseteq U_n\subseteq V_n$ have been chosen.  As $(x_t, f_t)_{t \in M}$ is a continuous Schauder frame for $X$ there exists $U\in\mathcal{D}$ such that for all $E\subset U^c$, we have that $|P_E^*f(y_j)|<\frac{A}{92^{n+1}B}$ for all $j\leq n$. 
 We have that for every $E \in \mathcal{D}$ there is some $F \succeq E$ and $z\in X$ with $\|z\|=1$ such that $|{T^*_{F} f}(z)| > A$.  By compactness, we may stabilize the values $|P^*_{V_j\setminus U_j} f(z)|$ for $1\leq j\leq n$.  
 That is, for all $E_1 \in \mathcal{D}$ there is some $F_1 \succeq E_1$ and $z_1\in X$ with $\|z_1\|=1$ such that for all $E_2 \in \mathcal{D}$ there is some $F_2 \succeq E_2$ and $z_2\in X$ with $\|z_2\|=1$ so the following is satisfied for all $1\leq j\leq n$, 
 \begin{equation}\label{E:ind}
 |{T^*_{F_1} f}(z_1)| > A,\quad |{T^*_{F_2} f}(z_2)| > A,\quad\textrm{and}\quad |P^*_{V_j\setminus U_j} f(z_1-z_2)|<\frac{A}{9(n+1)B}.
 \end{equation}
   We 
 choose $F_1\succeq U \cup V_n$ and obtain $z_1$ satisfying \eqref{E:ind}.  
 We choose $E_2\in\mathcal{D}$ so that $|{T^*_{F} f}(z_1)|<A/2$ for all $F\succeq E_2$. We now choose $F_2\succeq E_2$ and $z_2$ satisfying \eqref{E:ind}.  Let $U_{n+1}=E_2$ and $V_{n+1}\succeq U_{n+1}$ such that $P^*_{V_{n+1}\setminus U_{n+1}} f(z_2)> A$.
  We now let $y_{n+1}=z_2-z_1$.  These choices now satisfy our induction proof.

We have that $(y_n)_{n=1}^\infty$ is semi-normalized by (1) and (2).  We now prove that $(y_n)_{n=1}^\infty$  is equivalent to the unit vector basis of $\ell_1$.  Let $(a_n)_{n=1}^\infty\in\ell_1$ with $\sum |a_n|=1$.  Without loss of generality, we assume there is a subsequence $(a_{k_n})_{n=1}^\infty$ such that $a_{k_n}> 0$ for all $n\in\N$ and $\sum_{n=1}^\infty a_{k_n}\geq \frac{1}{2}$.

\begin{align*}
\|\sum_{n=1}^\infty a_n y_n\|&\geq B^{-1} (P^*_{\cup_{j\in\N} V_{k_j}\setminus U_{k_j}} f)(\sum_{n=1}^\infty a_n y_n)\\
&= B^{-1} \sum_{n=1}^\infty a_{k_n} P^*_{V_{k_n}\setminus U_{k_n}} f (y_n)+\sum_{j<n}a_n P^*_{V_{k_j}\setminus U_{k_j}} f( y_n)+
\sum_{j>n}a_n P^*_{V_{k_j}\setminus U_{k_j}} f( y_n)\\
&\geq B^{-1} \sum_{n=1}^\infty a_{k_n} A/2-\sum_{j<n} |a_n| n^{-1}B^{-1}9^{-1}A-
\sum_{j>n}|a_n| 2^{-j}B^{-1}9^{-j}A\\
&\geq B^{-1} \sum_{n=1}^\infty a_{k_n} A/2-\sum_{n\in\N} |a_n| B^{-1}9^{-1}A-
\sum_{n\in\N}|a_n| B^{-1}9^{-1}A\\
&\geq B^{-1} A/4-B^{-1}9^{-1}A-B^{-1}9^{-1}2^{-j}A=B^{-1}A36^{-1}
\end{align*}
Thus, $(y_n)_{n=1}^\infty$ dominates the unit vector basis of $\ell_1$ and is hence equivalent to it.
\end{proof}

We now consider the generalization of boundedly complete to the continuous setting.

\begin{defn} A continuous Schauder Frame $(x_t, f_t)_{t \in M}$ for $X$ is said to be \textit{boundedly complete} if $\lim_{E \in \mathcal{D}} \int_E x^{**}(f_t) x_t d\mu \in X$ for all $x^{**} \in X^{**}$. 
\end{defn}

Frames for Hilbert spaces are nicely characterized as projections of Riesz bases for larger Hilbert spaces.  Likewise, Schauder frames for Banach spaces are characterized as projections of Schauder bases for larger Banach spaces.  Furthermore, a Schauder frame is shrinking or boundedly complete if and only if it is the projection of a shrinking or boundedly complete Schauder basis \cite{BFL}.  This allows for constructing and studying frames by working directly with bases and then projecting onto a subspace.  Essentially, a redundant frame may be dilated to a non-redundant basis. However, this concept of dilation is only possible for continuous frames over purely atomic measures.   The following proposition shows that the reverse direction is still valid for continuous frames in that projecting continuous Schauder frames onto closed subspaces gives a continuous Schauder frame.
\begin{prop}\label{P:ProjF}
Let $(x_t, f_t)_{t}$ be a continuous Schauder frame for a Banach space $X$.  Let $Y\subseteq X$ be a complemented subspace and let $P:X\rightarrow Y$ be a bounded projection.
\begin{enumerate}
\item $(P x_t, P^* f_t)_{t}$ is a Schauder frame for $Y$.
\item If $(x_t, f_t)_{t}$ is shrinking then $(P x_t, P^* f_t)_{t}$ is shrinking.
\item If $(x_t, f_t)_{t}$ is boundedly complete then $(P x_t, P^* f_t)_{t}$ is boundedly complete.
\end{enumerate}
\end{prop}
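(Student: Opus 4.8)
\emph{Part (1).} View $P$ also as the idempotent $X\to X$ with range $Y$, so that $P^*f_t\in X^*$ and, since $(P^*f_t)(y)=f_t(Py)=f_t(y)$ for $y\in Y$, part (1) amounts to showing $y=\int_M f_t(y)(Px_t)\,d\mu$ weakly in $Y$. Because $(x_t,f_t)_t$ is a continuous Schauder frame of $X$, for each $E\in\Sigma$ there is $w_E\in X$ with $g(w_E)=\int_E f_t(y)g(x_t)\,d\mu$ for all $g\in X^*$ and $w_M=y$. Given $h\in Y^*$, pick a Hahn--Banach extension $g\in X^*$; then $Pw_E\in Y$ satisfies $h(Pw_E)=(P^*g)(w_E)=\int_E f_t(y)(P^*g)(x_t)\,d\mu=\int_E f_t(y)h(Px_t)\,d\mu$, while $Pw_M=Py=y$. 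Measurability of $t\mapsto(Px_t,P^*f_t)$ is clear and all integrals are finite by Lemma \ref{L:suppression}, so $(Px_t,P^*f_t)_t$ is a continuous Schauder frame of $Y$. Write $\widetilde{\mathcal D}$ for the directed set it determines; since $\|P^*f_t\|\,\|Px_t\|\le\|P\|^2\|f_t\|\,\|x_t\|$ we have $\mathcal D\subseteq\widetilde{\mathcal D}$, and the tail operators of the new frame satisfy $\widetilde T_E(y)=P(T_Ey)$ for $y\in Y$.

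\emph{Part (2).} If $h\in Y^*$ and $g\in X^*$ extends $h$, then for $y\in Y$, $\widetilde T^*_Eh(y)=\int_{M\setminus E}f_t(y)(P^*g)(x_t)\,d\mu=(T^*_E(P^*g))(y)$, so $\|\widetilde T^*_Eh\|\le\|T^*_E(P^*g)\|$ for every $E\in\Sigma$. The proof of Lemma \ref{L:uncS} shows that, since $(x_t,f_t)_t$ is shrinking, for each $\phi\in X^*$ and $\varepsilon>0$ there is $E_0\in\mathcal D$ with $\sup_{x\in B_X}\int_{M\setminus E_0}|f_t(x)\phi(x_t)|\,d\mu\le\varepsilon$; hence $\|T^*_F\phi\|\le\varepsilon$ for \emph{every} measurable $F\supseteq E_0$. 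Applying this with $\phi=P^*g$ and using $E_0\in\mathcal D\subseteq\widetilde{\mathcal D}$, we get $\|\widetilde T^*_Eh\|\le\varepsilon$ for all $E\in\widetilde{\mathcal D}$ with $E\supseteq E_0$, so $\lim_{E\in\widetilde{\mathcal D}}\|\widetilde T^*_Eh\|=0$ and $(Px_t,P^*f_t)_t$ is shrinking.

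\emph{Part (3).} Given $y^{**}\in Y^{**}$, define $x^{**}\in X^{**}$ by $x^{**}(g)=y^{**}(g|_Y)$; since $(P^*f_t)|_Y=f_t|_Y$, bounded completeness of the projected frame concerns the net $a_E:=\int_E x^{**}(f_t)(Px_t)\,d\mu$ $(E\in\widetilde{\mathcal D})$, and for $E\in\mathcal D$ one has $a_E=P^{**}N_E$ where $N_E:=\int_E x^{**}(f_t)x_t\,d\mu$. Bounded completeness of $(x_t,f_t)_t$ yields $z\in X$ with $N_E\to z$ in norm over $\mathcal D$, so $a_E\to P^{**}z=Pz\in Y$ over $\mathcal D$. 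To upgrade this to $\widetilde{\mathcal D}$: the net $(N_E)_{E\in\mathcal D}$ is norm-Cauchy, so given $\varepsilon>0$ there is $E_0\in\mathcal D$ with $\|N_G\|\le\varepsilon$ for every $G\in\mathcal D$ disjoint from $E_0$. For $H\in\widetilde{\mathcal D}$ disjoint from $E_0$ write $H=\bigcup_i(H\cap W_i)$ with $W_i=\{t:\|f_t\|\,\|x_t\|\le i\}$; each $H\cap W_i\in\mathcal D$ is disjoint from $E_0$, so $\|a_{H\cap W_i}\|=\|P^{**}N_{H\cap W_i}\|\le\|P\|\varepsilon$, and (the integrand being bounded on $H$ with $\mu(H)<\infty$) dominated convergence gives $a_{H\cap W_i}\to a_H$ weak$^{*}$ in $Y^{**}$, whence $\|a_H\|\le\|P\|\varepsilon$. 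By additivity of $a$ over disjoint extra finite sets, $\|a_E-a_{E_0}\|\le\|P\|\varepsilon$ for all $E\in\widetilde{\mathcal D}$ with $E\supseteq E_0$; letting $E$ run over $\{E\in\mathcal D:E\supseteq E_0\}$ (where $a_E\to Pz$) forces $\|a_{E_0}-Pz\|\le\|P\|\varepsilon$, so $\|a_E-Pz\|\le 2\|P\|\varepsilon$ for all such $E$, i.e. $\lim_{E\in\widetilde{\mathcal D}}a_E=Pz\in Y$.

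\emph{Main obstacle.} The genuine difficulty is that $\widetilde{\mathcal D}$ is strictly larger than $\mathcal D$ in general --- a set on which $P$ or $P^*$ kills the frame can be extra finite for the projected frame while carrying unbounded $\|f_t\|\,\|x_t\|$, and then $\mathcal D$ is not cofinal in $\widetilde{\mathcal D}$ --- so a $\widetilde{\mathcal D}$-limit cannot be read off from a $\mathcal D$-limit by restriction. Both (2) and (3) are resolved by exhibiting thresholds that lie in $\mathcal D$ yet control the entire $\widetilde{\mathcal D}$-net: for shrinking this is the ``uniform tail'' estimate embedded in the proof of Lemma \ref{L:uncS} (valid for all measurable supersets of $E_0$, not merely those in $\mathcal D$), and for bounded completeness it is the Cauchy-net argument combined with writing an arbitrary extra finite set as an increasing union of sets extra finite for the original frame.
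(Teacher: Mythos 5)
Your proof is correct, and at its core it follows the same route as the paper: using $(P^*f_t)(y)=f_t(y)$ for $y\in Y$, one identifies the partial and tail operators of the projected frame with $P$ composed with those of the original frame ($\widetilde{T}_E=PT_E$, $a_E=P^{**}N_E$) and transfers the limits; your part (1) is essentially verbatim the paper's argument. Where you genuinely add something is in parts (2) and (3). The paper simply writes $\lim_{E\in\mathcal{D}}\|(PT_E)^*g\|=\lim_{E\in\mathcal{D}}\|T_E^*(P^*g)\|=0$ and, for (3), verifies the integral identity for $Px$, tacitly using the \emph{same} directed set $\mathcal{D}$ for both frames. As you observe, the directed set $\widetilde{\mathcal{D}}$ attached to $(Px_t,P^*f_t)_t$ is defined via the possibly much smaller quantities $\|P^*f_t\|\,\|Px_t\|$, so it can strictly contain $\mathcal{D}$ without $\mathcal{D}$ being cofinal in it, and the paper's one-line limits do not literally establish the $\widetilde{\mathcal{D}}$-limits required by the definitions. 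Your two repairs --- the uniform tail estimate $\sup_{x\in B_X}\int_{M\setminus E_0}|f_t(x)\phi(x_t)|\,d\mu\le\varepsilon$ extracted from the proof of Lemma \ref{L:uncS}, which controls $\|T_F^*\phi\|$ for \emph{every} measurable $F\supseteq E_0$, and the Cauchy-net argument combined with the exhaustion $H=\bigcup_i(H\cap W_i)$ for bounded completeness --- are exactly what is needed to close this gap, at the cost of a longer argument. The only cosmetic point: in (3) each $a_H$ with $H\in\widetilde{\mathcal{D}}$ already lies in $Y$ (Lemma \ref{L:inX} applied to the projected frame), so the weak$^*$ convergence you invoke is just weak convergence in $Y$; the norm bound passes to the limit either way.
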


\begin{proof}
Let $y\in Y$ and $g\in Y^*$.  Let $E\subseteq M$ be measurable.   There exists $y_E\in X$ such that $y_E=\int f_t(y) x_t dt$.  Thus,
$$g(P y_E)=P^* g(y_E)=\int_E f_t(y) P^*g(x_t)dt=\int_E P^*f_t(y) g(Px_t)dt.
$$
Thus, $P y_E= \int_E f_t(y) x_t dt$.  Furthermore, $P y_M=Py=y$.  This proves that $(P x_t, P^* f_t)_{t}$ is a Schauder frame for $Y$.

We now assume that $(x_t, f_t)_{t}$ is shrinking.   Let $E\subseteq M$ be measurable.  If $P_E$ is the projection operator for the Schauder frame $(x_t, f_t)_{t}$ of $X$  then we have that $P P_E$ is the projection operator for the frame 
$(Px_t, P^*f_t)_{t}$ of $Y$.  Thus, if $T_E$ is the tail operator for the Schauder frame $(x_t, f_t)_{t}$ of $X$  then $P T_E$ is the tail operator for the frame 
$(Px_t, P^*f_t)_{t}$ of $Y$.  This gives that for all $g\in Y^*$ that
 $$\lim_{E \in \mathcal{D}} \|(PT_E)^*g\|=\lim_{E \in \mathcal{D}} \|T_E^* (P^*g)\|  = 0 .$$
Thus, $(Px_t, P^*f_t)_{t}$ is shrinking.

We now assume that  $(x_t, f_t)_{t}$ is boundedly complete.  Let $y^{**}\in Y^{**}$.  Let $I_X:Y\rightarrow X$ be the inclusion operator of $Y$ into $X$.  Thus, $I_X^{**}y^{**}\in X^{**}$ and there exists $x\in X$ such that $x=\int I_X^{**}y^{**}(f_t) x_t dt$. Let $E\subseteq M$ be measurable and $f\in X^{**}$.  We have that,
\begin{align*}
g(P(x_E))&=\int_E I^{**}y^{**}(f_t) P^*g^*(x_t) dt \\
&=\int_E y^{**}(f_t|_Y) g^*(Px_t) dt \\
&=\int_E y^{**}(P^*f_t) g^*(Px_t) dt \\
\end{align*}
Thus, we have that $Px=\int y^{**}(P^*f_t) Px_t dt$.  Hence, $(Px_t, P^*f_t)_{t}$ is boundedly complete.

\end{proof}

\begin{lem}\label{L:inX}
Let $(M,\Sigma,\mu)$ be a $\sigma$-finite measure space and $(x_t, f_t)_{t \in M}$ be a continuous Schauder frame for $X$.  For each $E \in \mathcal{D}$, the operator $P^{**}_E$ on $X^{**}$ defined by $P_E(x^{**})=\int_E x^{**}(f_t)x_t d\mu(t)$ is compact, has its range in $X$, and satisfies $\|P^{**}_E\|\leq B$ where $B$ is the suppression unconditionality constant of $(x_t, f_t)_{t \in M}$.
\end{lem}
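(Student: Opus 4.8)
The plan is to first prove that the partial frame operator $P_E\colon X\to X$, $P_Ex=\int_E f_t(x)x_t\,d\mu(t)$, is compact for $E\in\mathcal D$, and then read off all three assertions about $P_E^{**}$ by standard adjoint theory together with a Goldstine-plus-dominated-convergence identification. The norm bound is essentially free: Lemma \ref{L:suppression} already gives $\|P_Ex\|=\|x_E\|\le B\|x\|$ for every measurable $E$, so $\|P_E\|\le B$, and once $P_E^{**}$ is identified with the bidual of $P_E$ we get $\|P_E^{**}\|=\|P_E\|\le B$. Likewise, as soon as $P_E$ is known to be compact, $(P_E)^{**}$ is automatically compact and maps $X^{**}$ into $X$ (the standard fact that the bidual of a compact operator has range in the original target space). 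So the whole lemma comes down to compactness of $P_E$ plus the identification of $(P_E)^{**}$ with the integral operator in the statement.

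\emph{Compactness of $P_E$} is the heart of the argument. Fix $E\in\mathcal D$, put $A=\sup_{t\in E}\|f_t\|\|x_t\|<\infty$, and recall $\mu(E)<\infty$; since $X$ is separable, $t\mapsto x_t$ is strongly measurable on $E$. Morally $P_E=\int_E f_t\otimes x_t\,d\mu$ is a Bochner integral of rank-one operators with $\int_E\|f_t\otimes x_t\|\,d\mu\le A\mu(E)<\infty$, hence an element of the closed subspace $\mathcal K(X)\subseteq\mathcal L(X)$; to make this rigorous (and to dodge any measurability issue coming from $t\mapsto f_t$ being only weak$^*$-measurable) I would truncate. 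The set $\{t\in E: x_t=0\}$ contributes nothing and $\|x_t\|<\infty$ a.e., so with $E_{m,R}=\{t\in E: 2^{-m}<\|x_t\|\le R\}$ and the crude bound $\|P_F\|\le\int_F\|f_t\|\|x_t\|\,d\mu\le A\mu(F)$ one gets $P_E=\lim_{m,R}P_{E_{m,R}}$ in operator norm, so it suffices to show each $P_{E_{m,R}}$ is compact. On $E_{m,R}$ both $\|x_t\|\le R$ and $\|f_t\|\le A2^{m}$ are bounded; applying Egorov to $t\mapsto x_t$ gives, for any $\delta>0$, a set $F\subseteq E_{m,R}$ with $\mu(E_{m,R}\setminus F)<\delta$ and a simple function $s=\sum_{i=1}^N v_i\chi_{A_i}$ with $\sup_{t\in F}\|x_t-s(t)\|<\delta$. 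Since $\|f_t\|$ is bounded on each $A_i$, the maps $g_i(x)=\int_{A_i}f_t(x)\,d\mu$ lie in $X^*$, and splitting $P_{E_{m,R}}=P_{E_{m,R}\setminus F}+P_F$ one estimates $\|P_{E_{m,R}\setminus F}\|\le A\delta$ and $\|P_F-\sum_i g_i\otimes v_i\|\le A2^{m}\mu(E_{m,R})\delta$, exhibiting $P_{E_{m,R}}$ as a norm limit of finite-rank operators. Hence $P_{E_{m,R}}$, and therefore $P_E$, is compact.

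\emph{Passage to the bidual.} With $P_E$ compact, $(P_E)^{**}\colon X^{**}\to X^{**}$ is compact with range in $X$. For $f\in X^*$ the functional $P_E^*f$ is $x\mapsto\int_E f_t(x)f(x_t)\,d\mu$ (equivalently, the Pettis$^*$-integral furnished by Lemma \ref{L:DualFrame1}). Choosing $(y_n)\subseteq X$ with $\|y_n\|=\|x^{**}\|$ and $y_n\to x^{**}$ weak$^*$ (Goldstine), we get $(P_E)^{**}x^{**}(f)=x^{**}(P_E^*f)=\lim_n\int_E f(x_t)f_t(y_n)\,d\mu$; on $E\in\mathcal D$ the integrand is dominated by the constant $A\|f\|\|x^{**}\|$ on a finite-measure set while $f_t(y_n)\to x^{**}(f_t)$ pointwise, so dominated convergence yields $(P_E)^{**}x^{**}(f)=\int_E x^{**}(f_t)f(x_t)\,d\mu$. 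Thus $(P_E)^{**}x^{**}=\int_E x^{**}(f_t)x_t\,d\mu$ as a Pettis integral valued in $X$; this is the operator $P_E^{**}$ of the statement, it is compact, its range lies in $X$, and $\|P_E^{**}\|=\|P_E\|\le B$.

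\emph{Main obstacle.} The delicate point is the compactness step: one must make the "Bochner integral of rank-one operators" heuristic precise while only assuming $t\mapsto x_t$ strongly measurable and $t\mapsto f_t$ weak$^*$-measurable, which is exactly why the truncation to $E_{m,R}$ (forcing $\|f_t\|$ bounded, so that the partial functionals $g_i$ genuinely exist in $X^*$) is needed; everything after compactness is routine adjoint calculus plus dominated convergence.
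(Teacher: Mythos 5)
Your overall architecture is sound and genuinely different from the paper's. You prove compactness of $P_E\colon X\to X$ by truncating to sets where $\|f_t\|$ and $\|x_t\|$ are bounded and then approximating $t\mapsto x_t$ by simple functions (Egorov), exhibiting $P_E$ as a norm limit of finite-rank operators; you then invoke the standard fact that the bidual of a compact operator is compact with range in the original target space. The paper instead discretizes the operator-valued map $t\mapsto f_t\otimes x_t$ directly in $L(X^{**},X)$: it covers the range by countably many sets of diameter $\vp$, replaces $f_t\otimes x_t$ by point values $f_{t_j}\otimes x_{t_j}$, and shows the resulting absolutely convergent sums converge in operator norm to $\int_E f_t\otimes x_t\,d\mu$. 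That one construction delivers compactness, range in $X$, \emph{and} the integral formula for $P_E^{**}$ simultaneously; your route buys a cleaner separation into "compactness of $P_E$" plus "abstract bidual facts," at the cost of having to identify $(P_E)^{**}$ with the integral formula afterwards. One small imprecision: to get $P_{E_{m,R}}\to P_E$ you need $\int_{E\setminus E_{m,R}}\|f_t\|\|x_t\|\,d\mu\to 0$, which follows from dominated convergence (the sets decrease to $\{t:x_t=0\}$ and the integrand is dominated by $A\cdot 1_E$), not from the crude bound $A\mu(F)$ alone, since $\mu(E\setminus E_{m,R})$ need not tend to $0$.

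The step I would push back on is the identification of $(P_E)^{**}x^{**}$ with $\int_E x^{**}(f_t)x_t\,d\mu$ via a \emph{sequence} $(y_n)\subseteq X$ converging weak$^*$ to $x^{**}$. Goldstine gives weak$^*$-dense nets; extracting a sequence requires either $X^*$ separable (so that $B_{X^{**}}$ is weak$^*$-metrizable) or, by Odell--Rosenthal, $\ell_1\not\hookrightarrow X$ --- neither is assumed, and the lemma is invoked in Theorem \ref{T:bc} for spaces containing $c_0$, which may well also contain $\ell_1$. Dominated convergence along a net is false, so the argument as written does not close in general. The repair is available inside your own framework: since $P_E$ is a norm limit of the finite-rank operators $\sum_i g_i\otimes v_i$, one has $(P_E)^{**}x^{**}=\lim\sum_i x^{**}(g_i)v_i$, and the identification reduces to the single identity $x^{**}(g_i)=\int_{A_i}x^{**}(f_t)\,d\mu$, i.e.\ to commuting $x^{**}$ past the integral defining $g_i$. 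This holds once $t\mapsto f_t$ is strongly (Bochner) measurable on the bounded, finite-measure sets $A_i$ --- the same implicit regularity the paper needs for its map $t\mapsto f_t\otimes x_t$ to be coverable by countably many small sets, and indeed for the expression $\int_E x^{**}(f_t)x_t\,d\mu$ to be well defined at all. So the gap is repairable, but you should replace the sequential Goldstine-plus-dominated-convergence step with this (or an equivalent) argument; note that the paper's own proof of this lemma is structured precisely to avoid evaluating $x^{**}$ against integrated functionals, by only ever evaluating $x^{**}$ at the point values $f_{t_j}$.
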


\begin{proof}
We denote $L(X^{**},X)$ to be the set of bounded linear operators from $X^{**}$ to $X$.  We have for each $t\in M$ that $f_t \otimes x_t\in  L(X^{**},X)$, where $f_t \otimes x_t(x^{**})=f_t(x^{**})x_t$ for all $t\in M$. 
Let $\psi:M\rightarrow  L(X^{**},X)$ be the measurable map $t\mapsto f_t\otimes x_t$.
 As $(M,\Sigma,\mu)$ is $\sigma$-finite, if $(H_\alpha)$ is a measurable collection of pairwise disjoint subsets of $L(X^{**},X)$ then there exists a countable subset such that $\mu(\psi^{-1}(H_\alpha))=0$ for all $\alpha$ not in the subset.   Thus, for each $\vp>0$, there exists a countable collection of sets $(H_j)_{j\in J}$ in $L(X^{**},X)$ with diameter at most $\vp$ such that $\cup_{j\in J} \psi^{-1}(H_j)=E$ almost everywhere and  $\psi^{-1}(H_j)\neq\emptyset$ for all $j\in J$.   For all $j\in J$ choose $t_j\in \psi^{-1}(H_j)$.   Let $C=\sup_{t\in E}\|x_t\|\|f_t\|$.
We have that 
$$ \sum_{j\in J}\|f_{t_j}\otimes x_{t_j}\| \mu(\psi^{-1}(H_j))\leq \sum_{j\in J} C  \mu(\psi^{-1}(H_j))= C\mu(E).
$$
 Thus, $\sum_{j\in J} f_{t_j}\otimes x_{t_j} \mu(\psi^{-1}(H_j))$ converges unconditionaly in norm.  As, $ f_{t_j}\otimes x_{t_j}\in L(X^{**},X)$ for all $j\in J$, we have that $T_\vp:=\sum_{j\in J} f_{t_j}\otimes x_{t_j} \mu(\psi^{-1}(H_j))$ is an element of $L(X^{**},X)$.  For each $x^{**}\in X^{**}$ and $f\in X^*$, we have that 
 \begin{align*}
 |\int_E x^{**}(f_t) &f(x_t)d\mu - \sum_{j\in J} x^{**}(f_{t_j})f(x_{t_j})\mu(H_j)|\\
 &=| \sum_{j\in J} \left(\int_{\psi^{-1}(H_j)} x^{**}(f_t) f(x_t)d\mu -x^{**}(f_{t_j})f(x_{t_j})\mu(H_j)\right)|\\
 &\leq \sum_{j\in J} \int_{\psi^{-1}(H_j)} \|x^{**}\|_{X^{**}}\|f\|_{X^*}\|f_t\otimes x_t- f_{t_j}\otimes x_{t_j}\| d\mu\\
&\leq \sum_{j\in\N} \int_{\psi^{-1}(H_j)} \|x^{**}\|_{X^{**}}\|f\|_{X^*}\vp  d\mu\\
&=   \int_{E} \|x^{**}\|_{X^{**}}\|f\|_{X^*}\vp  d\mu\\
&= \|x^{**}\|_{X^{**}}\|f\|_{X^*}\vp  \mu(E)
 \end{align*}
 
 Thus,  $(T_{1/n})_{n=1}^\infty$ is Cauchy and  it converges in norm to $P^{**}_E=\int_E f_t \otimes x_t d\mu$.  
As the range of $T_{1/n}$ is in $X$ for all $n\in\N$, we have that  the range of $P^{**}_E$ is in $X$. Note that $P_E^{**}=(P_E)^{**}$, the double adjoint of the truncation operator for the continuous Schauder frame $(x_t, f_t)_{t \in M}$.   Thus, $\|P_E^{**}\|=\|P_E\|\leq B$.
 \end{proof}

\begin{thm}\label{T:bc}
Let $(x_t, f_t)_{t\in M}$ be a continuous Schauder frame for a Banach space $X$.  Then $(x_t, f_t)_{t\in M}$ is boundedly complete if and only if $c_0$ does not embed into $X$.
\end{thm}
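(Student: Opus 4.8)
The plan is to prove the two implications separately. For the direction ``bounded completeness $\Rightarrow$ $c_0$ does not embed into $X$'' I will use Sobczyk's theorem, Proposition~\ref{P:ProjF}(3), and the classical fact that $c_0$ is uncomplemented in $\ell_\infty$; for the converse I will prove the contrapositive — that a continuous Schauder frame which fails to be boundedly complete produces an isomorphic copy of $c_0$ in $X$ — via the $c_0$-theorem of Bessaga and Pe\l czy\'nski.

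Begin with the contrapositive of the ``if'' direction. Assume $(x_t,f_t)_{t\in M}$ is not boundedly complete, so there is a normalized $x^{**}\in X^{**}$ for which the net $\big(P^{**}_E x^{**}\big)_{E\in\mathcal D}$ does not converge; since each $P^{**}_E x^{**}$ lies in $X$ with $\|P^{**}_E x^{**}\|\le B\|x^{**}\|$ by Lemma~\ref{L:inX} and $X$ is complete, the net is not Cauchy. The first step is to extract a pairwise disjoint sequence $(\Delta_n)$ in $\mathcal D$ and a $\delta>0$ with $\|P^{**}_{\Delta_n}x^{**}\|>\delta$ for all $n$: given $\Delta_1,\dots,\Delta_{n-1}$, non-Cauchyness with $E_0:=\bigcup_{j<n}\Delta_j\in\mathcal D$ yields $E,E'\supseteq E_0$ in $\mathcal D$ with $\|P^{**}_E x^{**}-P^{**}_{E'}x^{**}\|>2\delta$, and since $P^{**}_E-P^{**}_{E'}=P^{**}_{E\setminus E'}-P^{**}_{E'\setminus E}$ with $E\setminus E'$ and $E'\setminus E$ both extra finite and disjoint from $E_0$, one of them serves as $\Delta_n$. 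Set $z_n:=P^{**}_{\Delta_n}x^{**}\in X$, so $\delta<\|z_n\|\le B\|x^{**}\|$. Next I check that $\sum_n z_n$ is weakly unconditionally Cauchy: for a finite $\Lambda$ and signs $(\epsilon_n)_{n\in\Lambda}$, the disjointness of the $\Delta_n$ together with the bound $\int_M|f_t(x)g(x_t)|\,d\mu\le B\|x\|\|g\|$ from the proof of Lemma~\ref{L:suppression} give $\big\|\sum_{n\in\Lambda}\epsilon_n P_{\Delta_n}\big\|\le B$, hence $\big\|\sum_{n\in\Lambda}\epsilon_n z_n\big\|=\big\|\big(\sum_{n\in\Lambda}\epsilon_n P_{\Delta_n}\big)^{**}x^{**}\big\|\le B\|x^{**}\|$. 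Thus $(z_n)$ is a semi-normalized weakly unconditionally Cauchy sequence, so it is not unconditionally convergent and the Bessaga--Pe\l czy\'nski theorem furnishes a subsequence $(z_{n_k})$ equivalent to the unit vector basis of $c_0$; therefore $c_0$ embeds into $X$.

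For the forward direction, suppose $c_0$ embeds into $X$ and, toward a contradiction, that $(x_t,f_t)$ is boundedly complete. Since $X$ is separable, Sobczyk's theorem gives a bounded projection $P\colon X\to Y$ onto a subspace $Y\cong c_0$, and by Proposition~\ref{P:ProjF}(3) the projected frame $(Px_t,P^*f_t)_{t\in M}$ is a boundedly complete continuous Schauder frame of $Y\cong c_0$; so it suffices to show that $c_0$ admits no boundedly complete continuous Schauder frame. If $(u_t,v_t)_{t\in M}$ were one, define $Q\colon c_0^{**}\to c_0$ by $Q(y^{**})=\lim_{E\in\mathcal D}\int_E y^{**}(v_t)u_t\,d\mu$: bounded completeness makes this limit exist, Lemma~\ref{L:inX} puts it in $c_0$, and $Q$ is linear and bounded. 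For $y\in c_0$ the limit $Q(y)=\lim_{E\in\mathcal D}P_E y$, with $P_E y=\int_E v_t(y)u_t\,d\mu$, exists in norm, and since $g(P_E y)=\int_E v_t(y)g(u_t)\,d\mu\to g(y)$ for every $g\in c_0^*$ by dominated convergence, this norm limit must be $y$. Hence $Q$ is a bounded projection of $c_0^{**}=\ell_\infty$ onto $c_0$, contradicting Phillips' theorem that $c_0$ is uncomplemented in $\ell_\infty$.

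I expect the real content to be concentrated in two places rather than a single hard obstacle: in the first half, the verification that $\sum_n P^{**}_{\Delta_n}x^{**}$ is weakly unconditionally Cauchy, which is exactly where the intrinsic unconditionality of continuous Schauder frames (Lemmas~\ref{L:suppression} and~\ref{L:inX}) is used; and in the second half, the observation that a boundedly complete continuous Schauder frame makes the underlying space norm-one-complemented in its bidual, which combined with Sobczyk's and Phillips' theorems rules out $c_0$. The only slightly delicate routine point is the block extraction $(\Delta_n)$ from non-Cauchyness, which goes through using the additivity $P_E=P_{E\cap E'}+P_{E\setminus E'}$ and the fact that subsets of extra finite sets are extra finite.
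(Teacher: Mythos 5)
Your proof is correct and follows essentially the same route as the paper: extract disjoint extra finite blocks from the non-Cauchy net and use the frame's unconditionality to get a copy of $c_0$, and in the other direction combine Sobczyk, Proposition~\ref{P:ProjF}(3), and Phillips' theorem. The only cosmetic difference is that you outsource the final step of the first half to the Bessaga--Pe\l czy\'nski $c_0$-theorem, whereas the paper proves directly that the blocks $u_n$ are weakly null and satisfy the upper $c_0$-estimate $\|\sum a_nu_n\|\le C\|(a_n)\|_\infty$ --- which is exactly the WUC estimate you establish.
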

\begin{proof}
We first assume that $(x_t, f_t)_{t\in M}$ is not boundedly complete.  Thus, there exists $x^{**}\in X^{**}$ such that 
$\lim_{E \in \mathcal{D}} \int_E x^{**}(f_t) x_t d\mu$ does not converge to an element of $X$.  We have by Lemma \ref{L:inX} that, $\int_E x^{**}(f_t) x_t d\mu$ is an element of $X$ for all $E\in\mathcal{D}$.  Hence, the net $\lim_{E \in \mathcal{D}} \int_E x^{**}(f_t) x_t d\mu$ is not Cauchy.  This gives that there exists $\delta > 0$ and extra finite sets $V_1 \subseteq W_1 \subseteq V_2 \subseteq W_2 \subseteq ...$ such that for $u_n =\int_{W_n - V_n} x^{**}_0(f_t) x_t d\mu$, we have $\norm{u_n} > \delta$ for every $k \in \mathbb{N}$.

Let $f\in X^*$.  As, $|\int_{M} x^{**}_0(f_t) f(x_t) d\mu|<\infty$ and $(W_n-V_n)_{k=1}^\infty$ is pairwise disjoint,  we have that 
$$\lim_{n\rightarrow\infty}f(u_n)=\int_{W_n - V_n} x^{**}_0(f_t) f(x_t) d\mu=0.$$
Thus, $(u_n)_{n=1}^\infty$ is a semi-normalized weakly null sequence.  After passing to a subsequence, we assume without loss of generality that $(u_n)_{n=1}^\infty$ is a basic sequence.  We will now prove that $(u_n)_{n=1}^\infty$ is isomorphic to the unit vector basis of $c_0$.  Let $C>0$ be the unconditionality constant of $(x_t, f_t)_{t\in M}$.  Let $f\in X^*$ and $(a_n)_{n\in\N}\in c_{00}$.  We have that 
$$|f(\sum a_n u_n)|=|\sum_{n\in\N}\int_{W_n-V_n} a_n x^{**}_0 (f_t) f(x_t) d\mu|\leq\sup |a_n| \int |x^{**}_0 (f_t) f(x_t)|d\mu \leq C\|f\|\|a_n\|_\infty 
$$
Thus, $(u_n)_{n\in\N}$ is equivalent to the unit vector basis of $c_0$.

We now assume  that $c_0$ is isomorphic to a subspace of $X$, and for the sake of contradiction we assume that $X$ has a boundedly complete continuous Schauder frame.  As $X$ is separable, every subspace isomorphic to $c_0$ is complemented in $X$ (see \cite{Go} for a nice proof of this fact).  Thus, there exists a boundedly complete continuous Schauder frame $(x_t ,f_t)_{t\in M}$ of $c_0$ by Theorem \ref{P:ProjF}.  As $(x_t ,f_t)_{t\in M}$ is boundedly complete, we may define $P:\ell_\infty\rightarrow c_0$ by 
$$P(x^{**})=\lim_{E \in \mathcal{D}} \int_E x^{**}(f_t) x_t d\mu\quad\textrm{ for all }x^{**}\in \ell_\infty.
$$
This gives a bounded linear projection from $\ell_\infty$ to $c_0\subseteq\ell_\infty$.  This is a contradiction as $c_0$ is not complemented inside $\ell_\infty$.

\end{proof}

\begin{thm} If a Banach Space $X$ admits a continuous Schauder Frame $(x_t, f_t)_{t \in M}$ then the following are equivalent: 
\begin{enumerate}
\item $(x_t, f_t)_{t \in M}$ is  shrinking and boundedly complete,\\
\item $X$ does not contain a copy of $c_0$ or $\ell_1$,\\
\item $X$ is reflexive.
\end{enumerate}
\end{thm}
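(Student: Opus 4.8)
The plan is to prove the cycle $(3)\Rightarrow(2)\Rightarrow(1)\Rightarrow(3)$, two links of which are essentially free. For $(1)\Leftrightarrow(2)$ I would simply conjoin the two characterizations already obtained: Theorem~\ref{T:l1} says $(x_t,f_t)_{t\in M}$ is shrinking precisely when $\ell_1$ does not embed into $X$, and Theorem~\ref{T:bc} says it is boundedly complete precisely when $c_0$ does not embed into $X$. For $(3)\Rightarrow(2)$ I would invoke that $c_0$ and $\ell_1$ are not reflexive while closed subspaces of reflexive spaces are reflexive, so a reflexive $X$ can contain a copy of neither. (Note that $(2)\Rightarrow(3)$ genuinely needs the presence of the frame: a general separable space may avoid $c_0$ and $\ell_1$ yet fail to be reflexive, so the route through $(1)$ is not a detour.)

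The substance is $(1)\Rightarrow(3)$. Assuming $(x_t,f_t)_{t\in M}$ is shrinking and boundedly complete, I will show the canonical embedding $X\hookrightarrow X^{**}$ is onto. Fix $x^{**}\in X^{**}$. By Theorem~\ref{T:AltShrink}, shrinking makes $(f_t,x_t)_{t\in M}\subseteq X^*\times X^{**}$ a continuous Schauder frame for $X^*$; writing the reconstruction $f=\int_M f(x_t)\,f_t\,d\mu$ as a Pettis integral of an $X^*$-valued function and evaluating both sides at $x^{**}$ gives
\[
x^{**}(f)=\int_M f(x_t)\,x^{**}(f_t)\,d\mu(t)\qquad\text{for all }f\in X^*.
\]
By bounded completeness the net $\bigl(\int_E x^{**}(f_t)\,x_t\,d\mu\bigr)_{E\in\mathcal D}$ converges in norm to some $x\in X$, each truncated integral lying in $X$ by Lemma~\ref{L:inX}. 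Since $f\in X^*$ is norm-continuous and commutes with the Pettis integral, $f(x)=\lim_{E\in\mathcal D}\int_E x^{**}(f_t)\,f(x_t)\,d\mu$.

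To finish I would identify this net limit with $\int_M x^{**}(f_t)\,f(x_t)\,d\mu$. The integrand $t\mapsto x^{**}(f_t)f(x_t)$ lies in $L_1(\mu)$ by Lemma~\ref{L:suppression} applied to the continuous Schauder frame $(f_t,x_t)_{t\in M}$ of $X^*$, so taking an increasing sequence $(U_i)_{i=1}^\infty\subseteq\mathcal D$ with $\bigcup_i U_i=M$ as constructed inside the proof of Theorem~\ref{T:AltShrink}, dominated convergence yields, for each $\vp>0$, an index $i$ with $\int_{U_i^c}|x^{**}(f_t)f(x_t)|\,d\mu<\vp$; then for every $E\in\mathcal D$ with $E\supseteq U_i$ we have $\bigl|\int_E x^{**}(f_t)f(x_t)\,d\mu-\int_M x^{**}(f_t)f(x_t)\,d\mu\bigr|<\vp$. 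Hence $f(x)=\int_M x^{**}(f_t)f(x_t)\,d\mu=x^{**}(f)$ for every $f\in X^*$, so $x$ represents $x^{**}$ under the canonical embedding and $X$ is reflexive. The one point I expect to require care is precisely this last cofinality step---forcing the net over $\mathcal D$ to converge to the integral over all of $M$---which relies both on the $L_1$ bound from Lemma~\ref{L:suppression} and on the cofinality of the countable chain $(U_i)$ in $\mathcal D$; the remaining steps, including the two biconditionals and the heredity of reflexivity, are routine.
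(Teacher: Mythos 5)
Your proposal is correct and follows essentially the same route as the paper: the equivalence $(1)\Leftrightarrow(2)$ by combining Theorems~\ref{T:l1} and~\ref{T:bc}, the trivial implication $(3)\Rightarrow(2)$, and the substantive step $(1)\Rightarrow(3)$ by using Theorem~\ref{T:AltShrink} to get $x^{**}(f)=\int_M f(x_t)\,x^{**}(f_t)\,d\mu$ and bounded completeness to produce $x\in X$ with $f(x)$ equal to the same integral, whence $x^{**}=x$. Your explicit treatment of the cofinality/dominated-convergence step identifying the net limit over $\mathcal D$ with the integral over $M$ is exactly the ``by dominated convergence'' step in the paper's computation, just spelled out in more detail.
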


\begin{proof}
We have that (1) and (2) are equivalent by Theorem \ref{T:l1} and Theorem \ref{T:bc}.  Furthermore, $(3) \implies (2)$ is clear since $c_0$ and $\ell_1$ are not reflexive. We now prove that $(1) \implies (3)$.

 Since $(x_t, f_t)_{t \in M}$ is boundedly complete, for each $x^{**} \in X^{**}$ we have an $x \in X$ such that $x = \lim_{A \in \mathcal{D}} \int_A x^{**}(f_t) x_t d\mu$.  As $(x_t,f_t)$ is shrinking, Theorem \ref{T:AltShrink} gives that
every $f \in X^{*}$ satisfies $f = \int x_t(f) f_t d\mu$. Then take arbitrary $f \in X^{*}$ and observe
 \begin{align*}
 x(f) =& f(x) = f(\lim_{A \in \mathcal{D}} \int_A x^{**}(f_t) x_t d\mu) \quad \textrm{by boundedly complete}, \\ 
 =& \lim_{A \in \mathcal{D}} f( \int_A x^{**}(f_t) x_t d\mu) \quad \textrm{by continuity}, \\
  =&  \lim_{A \in \mathcal{D}} \int_A x^{**}(f_t) x_t(f) d\mu \quad \textrm{by definition of the Pettis Integral}, \\
   =&  \int x^{**}(f_t) x_t(f) d\mu \quad \textrm{by dominated convergence.}
   \end{align*}
    On the other hand we have 
    \begin{align*} x^{**}(f) =& x^{**}(\int x_t(f) f_t d\mu) \quad \textrm{by Theorem \ref{T:AltShrink},} \\ 
    =& \int x^{**}(f_t) x_t(f) d\mu \quad \textrm{by definition of the Pettis Integral.}
     \end{align*}
 Compiling the above, we have $x(f) = x^{**}(f)$ for all $f \in X^{*}$. So $x^{**} = x \in X$. But this was for arbitrary $x^{**} \in X^{**}$. So $X^{**} = X$ as desired.
\end{proof}

\section{Continuous wavelet frames for $L_p$ with $1<p<\infty$}\label{S:ex}

Frame theory for Hilbert spaces developed concurrently with that of wavelets, and wavelets still provide some of the most useful examples of both continuous and discrete frames.  Wavelets are important in Banach spaces as well, and the Haar basis is possibly the most commonly used Schauder basis for $L_p$ with $1\leq p<\infty$.    One nice aspect of using a continuous wavelet frame as opposed to a discrete one is that when a function is translated or dilated, the continuous wavelet frame coefficients for the new function are the same (except occurring at different indexes).  On the other hand, if we translate or dilate a function by a non-integer amount, then the discrete wavelet frame  coefficients for the new function are completely different.  It is well known that any discrete wavelet for a Hilbert space gives a continuous wavelet \cite{WW}.  The goal of this section is to prove the corresponding result for $L_p$ for $1<p<\infty$.  We must give a completely different proof than that was used in proving the $L_2$ result as the Fourier transform is not an isometry on $L_p$ for $p\neq2$.

Let $1< p<\infty$ and $\psi\in L_p(\R)$.  For $a,b\in\R$, the dilation operator $D_a:L_p\rightarrow L_p$ and translation operator $T_b:L_p\rightarrow L_p$ are defined by $D_a(f)(t)=2^{a/p}f(2^at)$ and $T_b(f)(t)=f(t-b)$ for all $f\in L_p$ and $t\in \R$.
We call $\psi\in L_p(\R)$ with $\|\psi\|=1$ a wavelet for $L_p$ if $(D_nT_k\psi)_{k,n\in Z}$ is an unconditional Schauder basis of $L_p$.  If $\psi^*\in L_{p'}$ is the biorthogonal functional to $\psi=D_0T_0\psi$ in $(D_nT_k\psi)_{k,n\in Z}$.  Then for all $k,n\in\Z$, the biorthogonal functional to $D_nT_k\psi$ is given by $D^*_{-n}T^*_{-k}\psi^*$.  One can also think of $D^*_{-n}=D_{n}$ and $T^*_{-k}=T_k$ where $D_n$ and $T_k$ are now considered as the dilation and translation operators on $L_{p'}$ for $1/p+1/p'=1$.
 We say that $\psi$ is a continuous wavelet for $L_p$ if $(D_{a}T_b\psi,D^*_{-a}T^*_{-b}\psi^*)_{a,b\in\R}$ is a continuous frame of $L_p$.

The operators $T_a$ and $D_b$ have the following relationships.   For all $a,b\in\R$, $T_a^{-1}=T_{-a}$, $D_a^{-1}=D_{-a}$, $T_aT_b=T_{a+b}$, $D_aD_b=D_{a+b}$, $D_aT_b=T_{2^{-a}b}D_a$ and $D^*_aT^*_b=T^*_{2^{a}b}D^*_a$. For the sake of convenience, we write $\psi_{a,b}=D_aT_b\psi$ and $\psi^*_{a,b}=D^*_{-a}T^*_{-b}\psi$ where $a,b\in\R$.

\begin{lem}\label{L:limit}
Let $(M,\Sigma,\mu)$ be a sigma finite measure space and $X$ be a Banach space. Let $t\mapsto (x_t,f_t)$ be a measurable function from $M$ to $X\times X^*$.   If $((x^n_t,f^n_t)_{t\in M})_{n=1}^\infty$ is a sequence of continuous frames of $X$ with unconditionality constant $C$ such that there exists $D>0$ such that $\|x^n_t\|,\|f^n_t\|\leq D$ for all $t\in M$ and $n\in\N$.  Suppose that
for all $x\in X$ and $f\in X^*$ there exists an increasing sequence of finite measure sets $(E_N)_{N=1}^\infty \subseteq \Sigma$ with $\cup_{N=1}^\infty E_n=M$ so that  for all $N\in\N$.
\begin{enumerate}
\item[(a)] $\lim_{n\rightarrow\infty} \|x-\int_{E_N} f_t^n(x)x_t^n dt\|\leq 1/N$,
\item[(b)] $\lim_{n\rightarrow\infty} \int_{E_N} |(f_t-f_t^n)(x)|dt=0$,
\item[(c)] $\lim_{n\rightarrow\infty} \int_{E_N} | f(x^n_t-x_t)|dt=0$.
\end{enumerate}
Then $(x_t,f_t)_{t\in M}$ is a continuous frame of $X$ with unconditionality constant $C$.
\end{lem}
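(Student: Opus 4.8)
The plan is to verify the definition of a continuous Schauder frame for $(x_t,f_t)_{t\in M}$ head on. Fixing $x\in X$, I would produce for every $E\in\Sigma$ a vector $x_E\in X$ with $f(x_E)=\int_E f_t(x)f(x_t)\,d\mu$ for all $f\in X^*$ together with $x_M=x$, and along the way establish $\int_M|f_t(x)f(x_t)|\,d\mu\le C|f(x)|$, which is exactly the statement that the unconditionality constant is at most $C$. I would read the hypothesis as providing, for each $x$, a single increasing exhaustion $(E_N)$ of $M$ by finite measure sets along which (a) holds and (b), (c) hold for every $f$. Two preliminary observations are harmless: since $t\mapsto(x_t,f_t)$ is measurable and $X$ is separable, $F(t):=f_t(x)x_t$ is strongly measurable by the Pettis measurability theorem; and from (b), (c) one reads off $|f_t(x)|\le D\|x\|$ a.e.\ and, testing against a countable norming subset of $X^*$, $\|x_t\|\le D$ a.e., so $\|F(t)\|\le D^2\|x\|$ a.e.

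First I would handle $L_1$-integrability and the identity $x_M=x$. Fixing $f$, conditions (b), (c) and the uniform bounds give that $t\mapsto f^n_t(x)f(x^n_t)$ converges to $t\mapsto f_t(x)f(x_t)$ in $L_1(E_N)$ for each $N$ (split the difference as $f^n_t(x)(f(x^n_t)-f(x_t))+(f^n_t(x)-f_t(x))f(x_t)$). Since each $(x^n_t,f^n_t)$ has unconditionality constant $C$ and $\int_M f^n_t(x)x^n_t\,d\mu=x$, one has $\int_M|f^n_t(x)f(x^n_t)|\,d\mu\le C|f(x)|$; restricting to $E_N$, letting $n\to\infty$, then $N\to\infty$ by monotone convergence, yields $t\mapsto f_t(x)f(x_t)\in L_1(M)$ with the desired bound. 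For the identity, $\int_{E_N}f^n_t(x)f(x^n_t)\,d\mu=f(\int_{E_N}f^n_t(x)x^n_t\,d\mu)$, so by (a), $|f(x)-\int_{E_N}f_t(x)f(x_t)\,d\mu|=\lim_n|f(x-\int_{E_N}f^n_t(x)x^n_t\,d\mu)|\le\|f\|/N$, and since $\int_{M\setminus E_N}|f_t(x)f(x_t)|\,d\mu\to0$ one gets $\int_M f_t(x)f(x_t)\,d\mu=f(x)$.

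Next I would produce $x_E$. For $\mu(E)<\infty$ this is immediate: $F$ is strongly measurable and essentially bounded, hence Bochner integrable over $E$, so $x_E:=\int_E F\,d\mu\in X$ represents $\int_E f_t(x)f(x_t)\,d\mu$. For general $E$ the vectors $\int_{E\cap E_N}F\,d\mu\in X$ are Cauchy provided $\delta_N:=\sup_{\|f\|\le1}\int_{M\setminus E_N}|f_t(x)f(x_t)|\,d\mu\to0$, since for $N<N'$ the difference $\int_{E\cap(E_{N'}\setminus E_N)}F\,d\mu$ has norm at most $\delta_N$; then $x_E:=\lim_N\int_{E\cap E_N}F\,d\mu\in X$ and dominated convergence (dominated by $t\mapsto|f_t(x)f(x_t)|$, which lies in $L_1(M)$) gives $f(x_E)=\int_E f_t(x)f(x_t)\,d\mu$ for every $f$, completing the verification.

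The hard part is the uniform tail bound $\delta_N\to0$, and this is where (a), (b), (c) and the uniform unconditionality of the $(x^n)$ all enter. Fixing $f$: since $\int_M f^n_t(x)x^n_t\,d\mu=x$ and truncation is finitely additive, $\int_{M\setminus E_N}f^n_t(x)x^n_t\,d\mu=x-\int_{E_N}f^n_t(x)x^n_t\,d\mu$, so (a) gives $\limsup_n\|\int_{M\setminus E_N}f^n_t(x)x^n_t\,d\mu\|\le1/N$, and the unconditionality constant $C$ of $(x^n_t,f^n_t)$ then bounds $\int_{M\setminus E_N}|f^n_t(x)f(x^n_t)|\,d\mu\le C\|f\|\,\|\int_{M\setminus E_N}f^n_t(x)x^n_t\,d\mu\|$, whence $\limsup_n\int_{M\setminus E_N}|f^n_t(x)f(x^n_t)|\,d\mu\le C\|f\|/N$. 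Combining this with the $L_1(E_N)$-convergence above and the uniform bound $\int_M|f^n_t(x)f(x^n_t)|\,d\mu\le C|f(x)|$ in a $\liminf$/$\limsup$ comparison shows first that $\int_M|f^n_t(x)f(x^n_t)|\,d\mu\to\int_M|f_t(x)f(x_t)|\,d\mu$, hence $\int_{M\setminus E_N}|f^n_t(x)f(x^n_t)|\,d\mu\to\int_{M\setminus E_N}|f_t(x)f(x_t)|\,d\mu$, and therefore $\int_{M\setminus E_N}|f_t(x)f(x_t)|\,d\mu\le C\|f\|/N$; taking the supremum over $\|f\|\le1$ gives $\delta_N\le C/N$. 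Everything else — the $L_1$ bookkeeping, Pettis measurability, Bochner integration on finite measure sets, dominated convergence — is routine; the one point of care is that the exhaustion $(E_N)$ must be chosen independently of $f$, which is how I have interpreted the hypothesis.
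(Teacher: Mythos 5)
Your overall architecture for the scalar estimates matches the paper's: you use (b), (c) and the uniform unconditionality constant of the approximating frames to get $\int_M|f_t(x)f(x_t)|\,d\mu\le C\|f\|\,\|x\|$, and (a) to identify $\int_M f_t(x)f(x_t)\,d\mu=f(x)$; those parts are correct. The genuine gap is in your production of $x_E\in X$ for sets $E$ of infinite measure, specifically the inequality
\[
\int_{M\setminus E_N}|f^n_t(x)f(x^n_t)|\,d\mu\;\le\;C\,\|f\|\,\Bigl\|\int_{M\setminus E_N}f^n_t(x)x^n_t\,d\mu\Bigr\|.
\]
This would follow from unconditionality if the truncation operators $P_A$ behaved like basis projections, i.e.\ $P_B=P_BP_A$ for $B\subseteq A$; for frames they do not, and the inequality is false. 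For instance, the four-element unconditional Schauder frame $(e_1,e_1^*),(e_2,e_2^*),(Re_1,Re_1^*),(Re_1,-Re_1^*)$ of $\ell_2^2$ has tail set $A=\{3,4\}$ with $\sum_{j\in A}f_j(x)x_j=0$ while $\sum_{j\in A}|f_j(x)f(x_j)|=2R^2|e_1^*(x)||f(e_1)|\ne 0$, so no constant can bound the left side by the norm of the tail vector. Everything downstream of this step (the convergence of the total variations, the bound $\delta_N\le C/N$, and the norm-Cauchyness of $\int_{E\cap E_N}F\,d\mu$) therefore collapses, and your construction only places $x_E$ in $X^{**}$.

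The paper closes exactly this hole by a different device: having shown $|\int_E f_t(x)f(x_t)\,d\mu|\le C\|f\|\,\|x\|$ for every $f$, it invokes reflexivity of $X$ to conclude that the bounded functional $f\mapsto\int_E f_t(x)f(x_t)\,d\mu$ is represented by some $x_E\in X$ with $\|x_E\|\le C\|x\|$. (Reflexivity is not listed in the lemma's hypotheses, but it is used in the paper's proof and holds in the intended application to $L_p$, $1<p<\infty$.) If you add reflexivity to your argument you may simply delete the tail-bound paragraph and you are done; without it, you would need a correct substitute for the uniform tail estimate, and I do not see one that follows from (a)--(c) alone. Your observation that the exhaustion $(E_N)$ must be taken independent of $f$ is a real issue for your route but is moot for the paper's, since its bound $C\|f\|\,\|x\|$ is uniform regardless of how $(E_N)$ depends on $f$.
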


\begin{proof}
Let $x\in X$ and $f\in X^*$ .  For $E\in\Sigma$ we have that 
\begin{align*}
&|\int_E f_t(x)f(x_t)dt|\leq \int_E |f_t(x)f(x_t)dt|\\
&= \lim_{N\rightarrow\infty}\int_{E\cap E_N} |f_t(x)f(x_t)|dt\quad\textrm{ by continuity from below}\\
&\leq  \lim_{N\rightarrow\infty}\lim_{n\rightarrow\infty}\int_{E_N} | f_t(x)f(x^n_t-x_t)|dt+ \int_{E_N} | (f_t-f_t^n)(x)f(x^n_t)|dt+\int_{E_N} | f_t^n(x)f(x^n_t)|dt\\
&\leq \lim_{N\rightarrow\infty} \lim_{n\rightarrow\infty} \|f_t\|\|x\|\int_{E_N} |f(x^n_t-x_t)|dt + \|f\|\|x^n_t\|\int_{E_N} |(f_t-f_t^n)(x)|dt+\int_{ E_N} | f_t^n(x)f(x^n_t)dt|\\
&\leq 0+0 + C\|f\|\|x\|
\end{align*}
This proves that $x\mapsto \int_E f_t(x) x_t dt$ defines a bounded linear functional on $X^*$ with norm at most $C\|x\|$.  As $X$ is reflexive, there exists $x_E\in X$ with $\|x_E\|\leq C\|x\|$ so that $f(x_E)= \int_E f_t(x) f(x_t) dt$ for all $f\in X^*$.

We now check that $x=x_M$.

\begin{align*}
&|f(x)\!-\!\int\! f_t(x)f(x_t)dt|=\lim_{N\rightarrow\infty} |f(x)\!-\!\int_{E_N}\! f_t(x)f(x_t)dt|\\
&\leq\lim_{N\rightarrow\infty} \lim_{n\rightarrow\infty}\! \int_{E_N}\! |f_t(x)f(x^n_t-x_t)|dt\!+\! \int_{E_N}\! | (f_t-f_t^n)(x)f(x^n_t)|dt\!+\!|f(x)\!-\!\int_{E_N}\! f_t^n(x)f(x^n_t)dt|\\
&\leq\lim_{N\rightarrow\infty} \lim_{n\rightarrow\infty} \|f_t\|\|x\|\int_{E_N} |f(x^n_t-x_t)|dt + \|f\|\|x^n_t\|\int_{E_N} |(f_t-f_t^n)(x)|dt+|f(x)-\int_{E_N}  f_t^n(x)f(x^n_t)dt|\\
&=\lim_{N\rightarrow\infty} 0+0 + 1/N=0
\end{align*}

\end{proof}

\begin{thm}
Let $1< p<\infty$.  Suppose that $\psi$ is a wavelet for $L_p$ with unconditonality constant $C$.  Then $\psi$ is a continuous wavelet for $L_p$ with  unconditionality constant $C$.
\end{thm}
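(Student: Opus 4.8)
The plan is to realize $(\psi_{a,b},\psi^*_{a,b})_{(a,b)\in\R^2}$ as a limit of discrete-type continuous frames and to invoke Lemma~\ref{L:limit}. We work on $M=\R\times\R$ with the measure $\mu$ from the definition of a continuous wavelet, namely a fixed constant multiple of Lebesgue measure $da\,db$; this is the image of the left Haar measure of the affine group under $(a,b)\mapsto D_aT_b$, and it is invariant under the parameter action $(c,d)\ast(a,b)=(c+a,\,2^{a}d+b)$ that corresponds to composing with $D_cT_d$. We set $(x_{(a,b)},f_{(a,b)})=(\psi_{a,b},\psi^*_{a,b})$ and take the exhausting sets $E_N=[-N,N]^2$. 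Since $D_a,T_b$ are isometries of $L_p$ and $D^*_{-a},T^*_{-b}$ are isometries of $L_{p'}$, we have $\|\psi_{a,b}\|_p=\|\psi\|_p=1$ and $\|\psi^*_{a,b}\|_{p'}=\|\psi^*\|_{p'}$ for all $(a,b)$, so the uniform bound $D=\max\{1,\|\psi^*\|_{p'}\}$ demanded by Lemma~\ref{L:limit} holds.

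The heart of the argument is the construction of the approximating continuous frames $(x^n_t,f^n_t)_{t\in\R^2}$: for each $n$ one discretizes the continuous wavelet family at dyadic scale $2^{-n}$ in a way that is still an \emph{exact} continuous frame of $L_p$. The structural tools are the covariance identities $\psi_{(c,d)\ast(a,b)}=D_cT_d\,\psi_{a,b}$ and $\psi^*_{(c,d)\ast(a,b)}=D_cT_d\,\psi^*_{a,b}$ (the latter with $D_c,T_d$ read as the $L_{p'}$ dilation and translation), which follow from $D_aD_c=D_{a+c}$, $T_aT_c=T_{a+c}$, $D_aT_b=T_{2^{-a}b}D_a$; the fact that the parameter plane tiles as $\R^2=\bigsqcup_{(n,k)\in\Z^2}(n,k)\ast F$ for the fundamental domain $F=\{(s,\rho):0\le s<1,\ 0\le\rho<2^{s}\}$, with $\mu$ invariant under each $(n,k)\ast(\cdot)$ and $\mu((n,k)\ast F)=1$; and the basis reconstruction $x=\sum_{n,k\in\Z}\psi^*_{n,k}(x)\psi_{n,k}$. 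Subdividing $F$ into cells of diameter at most $2^{-n}$, transporting the subdivision across the tiling, and defining $x^n,f^n$ cell by cell from the corresponding rescaled wavelet vectors and functionals --- with the cell weights dictated by the requirement $\int_M f^n_t(x)x^n_t\,d\mu=x$ --- should produce a genuine continuous frame of $L_p$ whose unconditionality inequality $\int_M|f^n_t(x)g(x^n_t)|\,d\mu\le C|g(x)|$ is inherited from the unconditionality constant $C$ of the basis, and whose vectors and functionals are bounded uniformly in $n$.

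Granting such a construction, the hypotheses of Lemma~\ref{L:limit} are then checked as follows. For (b) and (c): on each cell of the $n$-th subdivision $x^n_t$ and $f^n_t$ coincide with a wavelet vector, resp.\ functional, whose parameters lie within $O(2^{-n})$ of $t$ and whose weight tends to $1$, so by the norm continuity of $(a,b)\mapsto D_aT_b\psi$ on $L_p$ and of $(a,b)\mapsto D^*_{-a}T^*_{-b}\psi^*$ on $L_{p'}$ (valid since $1<p<\infty$) the integrands $|g(x_t-x^n_t)|$ and $|(f_t-f^n_t)(x)|$ converge to $0$ pointwise on $E_N$ and are dominated by a constant; dominated convergence on the finite-measure set $E_N$ gives (b), (c). For (a): since $(x^n_t,f^n_t)$ is an exact continuous frame, $x-\int_{E_N}f^n_t(x)x^n_t\,d\mu=\int_{E_N^{c}}f^n_t(x)x^n_t\,d\mu$, and by Lemma~\ref{L:suppression} applied with the uniform suppression constant (which is at most $C$, since $\int_M|f^n_t(x)g(x^n_t)|\,d\mu\le C\|x\|\|g\|$) the $E_N$-tail is small once $N$ is large, uniformly in $n$; this yields (a). Lemma~\ref{L:limit} then gives that $(\psi_{a,b},\psi^*_{a,b})_{(a,b)\in\R^2}$ is a continuous frame of $L_p$ with unconditionality constant $C$, i.e.\ that $\psi$ is a continuous wavelet for $L_p$ with unconditionality constant $C$.

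I expect the main obstacle to be the middle step. One needs a dyadic discretization of the continuous wavelet family that is simultaneously exact at every finite stage, of uniformly bounded vectors and functionals, of unconditionality constant exactly $C$, and convergent to $(\psi_{a,b},\psi^*_{a,b})$ in the weak senses (b), (c). Exactness together with the sharp constant $C$ pushes one toward using only the integer-indexed basis vectors $\psi_{n,k}$ spread over cells of total $\mu$-weight $1$ per index, whereas convergence to the continuous family forces non-integer parameters and arbitrarily small cells; reconciling these is precisely where the affine-group covariance of $\psi$ and the $\mu$-invariance of the action on the parameter plane must be used, and keeping the unconditionality constant from drifting above $C$ in the process is the technical core of the proof.
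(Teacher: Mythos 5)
Your overall strategy is the same as the paper's: approximate $(\psi_{a,b},\psi^*_{a,b})$ by piecewise-constant families built from translated and dilated copies of the discrete wavelet basis and pass to the limit via Lemma~\ref{L:limit}, with hypotheses (b) and (c) handled by norm continuity of $(a,b)\mapsto D_aT_b\psi$ and dominated convergence on the finite-measure sets $E_N$. The problem is that the step you yourself flag as the ``main obstacle'' --- producing, for each $n$, an exact continuous Schauder frame with unconditionality constant $C$ and uniformly bounded vectors --- is precisely the content of the paper's proof, and you do not carry it out. The paper's explicit choice is $\psi^N_{a,b}=\psi_{l_a+r_a/N,\;m_b+s_b2^{l_a}/N}$ on the cell $[l_a+r_a/N,\,l_a+(r_a+1)/N)\times[m_b+s_b/N,\,m_b+(s_b+1)/N)$; the twist $2^{l_a}$ in the second parameter is chosen so that, via $D_aT_b=T_{2^{-a}b}D_a$, the level-$N$ family is the union over $(r,s)\in\{0,\dots,N-1\}^2$ of the isometric images $D_{r/N}T_{s/N}(D_lT_m\psi)_{l,m\in\Z}$ of the wavelet basis, each cell carrying Lebesgue weight $N^{-2}$. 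The uniform bound on $\|\psi^N_{a,b}\|\,\|\psi^{N*}_{a,b}\|$, the $C$-unconditionality, and the norm of the partial reconstruction operators then all follow by applying the basis properties to $T_{-s/N}D_{-r/N}x$ and averaging over $(r,s)$. Your tiling-by-a-fundamental-domain picture is compatible with this, but as written it is a plan rather than a proof, and the tension you describe at the end (integer-indexed basis vectors versus arbitrarily fine cells) is resolved exactly by this commutation trick, which you do not exhibit.

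There is also a concrete error in your verification of hypothesis (a). You write $x-\int_{E_N}f^n_t(x)x^n_t\,d\mu=\int_{E_N^c}f^n_t(x)x^n_t\,d\mu$ and claim the tail is small for large $N$, uniformly in $n$, ``by Lemma~\ref{L:suppression}.'' Lemma~\ref{L:suppression} only gives the bound $\bigl|\int_{E_N^c}f^n_t(x)g(x^n_t)\,d\mu\bigr|\le C\|x\|\,\|g\|$ for every measurable set; it says nothing about the tail tending to zero, let alone uniformly in $n$ (for a fixed $n$ the tail does tend to $0$ as $N\to\infty$, but with no control on the rate as $n$ varies). The uniform-in-$n$ estimate is where the paper uses that $\{D_aT_bx:|a|,|b|\le1\}$ is norm compact in $L_p$, so that the basis partial sums over $[-M,M]^2$ approximate the identity within $\varepsilon$ uniformly on this set; averaging over the $N^2$ shifted copies then yields $\|x-\int_{[-M,M]^2}\psi^{N*}_{a,b}(x)\psi^N_{a,b}\,da\,db\|<\varepsilon$ for \emph{every} $N$ simultaneously, which is what (a) requires. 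You need to replace your appeal to Lemma~\ref{L:suppression} by an argument of this kind.
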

\begin{proof}
 For each $N\in\N$ and $a,b\in\R$  let $l_a,m_b\in\Z$ and $r_a,s_b\in\{0,1,...,N-1\}$ such that $l_a+r_a N^{-1}\leq a<l_a+(r_a+1)N^{-1}$ and $m_b+s_b N^{-1}\leq b<m_b+(s_b+1)N^{-1}$.  
We now 
set 
$$\psi^N_{a,b}=\psi_{l_a+\frac{r_a }{N},m_b+\frac{s_b 2^{l_a}}{N}}.$$

For all $f\in L_{p'}$ and $N\in\N$, we have that, 
$ |f(\psi^{N}_{a,b})-f(\psi_{a,b})| \leq  2\|\psi\|\|x\|.
$
The map $(a,b)\mapsto \psi_{a,b}$ is continuous in norm.  Thus, for all $a,b\in\R$, $\psi^N_{a,b}\rightarrow \psi_{a,b}$ in norm (though we actually only need weak convergence).  We have by the dominated convergence theorem that for all $M\in\N$,
\begin{equation}\label{E:conv2}
\lim_{N\rightarrow\infty}\int_{-M}^M \int_{-M}^M |f(\psi^{N}_{a,b})-f(\psi_{a,b})| dadb=0.
\end{equation}
Likewise, for all $x\in L_p$ and $M\in\N$,
\begin{equation}\label{E:conv}
\lim_{N\rightarrow\infty}\int_{-M}^M \int_{-M}^M |\psi^{N*}_{a,b}(x)-\psi^*_{a,b}(x)| dadb=0.
\end{equation}

Now, let $x\in L_p$ and $\vp>0$. The set $\{ D_aT_bx:|a|,|b|\leq 1\}$ is compact in $L_p$ and thus there exists
 $M\in\N$ such that $\|D_aT_bx-\sum_{n,k=-M}^{M-1} \psi^*_{n,k}(D_aT_bx)\psi_{n,k}\|<\vp$ for all $|a|,|b|\leq1$.  

\begin{align*}
&\|x-\int_{-M}^M\int_{-M}^M\psi^{N*}_{a,b}(x)\psi^N_{a,b}dadb\|=\|x-\sum_{r,s=0}^{N-1}\sum_{l,m=-M}^M N^{-2} \psi^{*}_{l+\frac{r}{N},m+\frac{s 2^{l}}{N}}(x)\psi_{l+\frac rN,m+\frac{s 2^{l}}{N}}\|\\
&=\|x-N^{-2}\sum_{r,s=0}^{N-1}\sum_{l,m=-M}^{M-1}  D^*_{-l-\frac{r}{N}}T^*_{-m-\frac{s2^l}{N}}\psi^*(x) D_{l+\frac{r}{N}}T_{m+\frac{s2^l}{N}}\psi\|\\
&=\|x-N^{-2}\sum_{r,s=0}^{N-1}\sum_{l,m=-M}^{M-1}  D^*_{-\frac{r}{N}}D^*_{-l}T^*_{-\frac{s2^l}{N}}T^*_{-m}\psi^*(x) D_{\frac{r}{N}}D_{l}T_{\frac{s2^l}{N}}T_{m}\psi\|\\
&=\|x-N^{-2}\sum_{r,s=0}^{N-1}\sum_{l,m=-M}^{M-1}  D^*_{-\frac{r}{N}}T^*_{-\frac{s}{N}}D^*_{-l}T^*_{-m}\psi^*(x) D_{\frac{r}{N}}T_{\frac{s}{N}}D_{l}T_{m}\psi\|\\
&=\|x-N^{-2}\sum_{r,s=0}^{N-1}\sum_{l,m=-M}^{M-1}  D^*_{-\frac{r}{N}}T^*_{-\frac{s}{N}}D^*_{-l}T^*_{-m}\psi^*(x) D_{\frac{r}{N}}T_{\frac{s}{N}}D_{l}T_{m}\psi\|\\
&\leq N^{-2}\sum_{r,s=0}^{N-1}\|x-\sum_{l,m=-M}^{M-1} D^*_{-\frac{r}{N}}T^*_{-\frac{s}{N}}D^*_{-l}T^*_{-m}\psi^*(x) D_{\frac{r}{N}}T_{\frac{s}{N}}D_{l}T_{m}\psi\|\\
&\leq N^{-2}\sum_{r,s=0}^{N-1}\|T_{-\frac{s}{N}}D_{-\frac{r}{N}}x-\sum_{l,m=-M}^{M-1} D^*_{-l}T^*_{-m}\psi^*(T_{-\frac{s}{N}}D_{-\frac{r}{N}}x) D_{l}T_{m}\psi\|\\
&< N^{-2} \sum_{r,s=0}^{N-1} \vp=\vp
\end{align*}

Thus, we have for all $N\in\N$ that 
\begin{equation}\label{E:part1}
\|x-\int_{-M}^M\int_{-M}^M\psi^{N*}_{a,b}(x)\psi^N_{a,b}\|dadb<\vp
\end{equation}

Let $x\in L_p$, $f\in L_{p'}$, and measurable $E\subset\R^2$.  For each $l,m\in\Z$ and $r,s\in\{0,1,...,N-1\}$ we let  $E_{l,r,m,s}=E\cap [l+\frac{r}{N},l+\frac{r+1}{N}]\times[m+\frac{s}{N},m+\frac{s+1}{N}]$.

We have that 
\begin{align*}
|\int_{(a,b)\in E}  &\psi^{N*}_{a,b}(x)f(\psi^N_{(a,b)})d(a,b)|=|\sum_{r,s=0}^{N-1}\sum_{l,m\in\Z} \psi^{*}_{l+\frac{r}{N},m+\frac{s 2^{l}}{N}}(x)f(\psi_{l+\frac rN,m+\frac{s 2^{l}}{N}})\lambda(E_{l,r,m,s})|\\
&\leq\|f\|\sum_{r,s=0}^{N-1}\|\sum_{l,m\in\Z} \psi^{*}_{l+\frac{r}{N},m+\frac{s 2^{l}}{N}}(x)\psi_{l+\frac rN,m+\frac{s 2^{l}}{N}}\lambda(E_{l,r,m,s})\|\\
&=\|f\|\sum_{r,s=0}^{N-1}\|\sum_{l,m\in\Z}  D^*_{-l-\frac{r}{N}}T^*_{-m-\frac{s2^l}{N}}\psi^*(x) D_{l+\frac{r}{N}}T_{m+\frac{s2^l}{N}}\psi \lambda(E_{l,r,m,s})\|\\
&=\|f\|\sum_{r,s=0}^{N-1}\|\sum_{l,m\in\Z-M} D^*_{-l}T^*_{-m}\psi^*(T_{-\frac{s}{N}}D_{-\frac{r}{N}}x) D_{l}T_{m}\psi \lambda(E_{l,r,m,s})\|\qquad\textrm{ as before.}\\
&\leq\|f\|CN^{-2}\sum_{r,s=0}^{N-1}\|\sum_{l,m\in\Z-M} D^*_{-l}T^*_{-m}\psi^*(T_{-\frac{s}{N}}D_{-\frac{r}{N}}x) D_{l}T_{m}\psi\|\qquad\textrm{ by $C$-unconditionality.}\\
&=\|f\|CN^{-2}\sum_{r,s=0}^{N-1}\|T_{-\frac{s}{N}}D_{-\frac{r}{N}}x\|\\
&=\|f\|CN^{-2}\sum_{r,s=0}^{N-1}\|x\|\qquad\textrm{ as $T_{-\frac{s}{N}}$ and $D_{-\frac{r}{N}}$ are isometries.} \\
&=\|f\|C\|x\|
 \end{align*}

Thus, the map $f\mapsto \int_{(a,b)\in E}  \psi^{N*}_{a,b}(x)f(\psi^N_{(a,b)})d(a,b)$ defines a bounded linear functional on $L_{p'}$ with norm at most $C\|x\|$.  Hence, as $L_p$ is reflexive, there exists $x_E\in L_p$ with $\|x_E\|\leq C\|x\|$ so that for all $f\in L_{p'}$ we have that 
$f(x_E)=\int_{(a,b)\in E}  \psi^{N*}_{a,b}(x)f(\psi^N_{(a,b)})d(a,b)$.  By \eqref{E:part1}, we have that $x_{\R^2}=x$.
Thus, $(\psi^N_{a,b},\psi^{N*}_{a,b})$ is a $C$-unconditional continuous Schauder frame of $L_p$.  By Lemma \ref{L:limit}, we have that $(\psi_{a,b},\psi_{a,b}^*)_{(a,b)\in\R^2}$ is a $C$-unconditional continuous Schauder frame of $L_p$.

\end{proof}

\section{Continuous Schauder frames for $\ell_p$ for $1<p<\infty$}\label{S:ex2}

Given a Banach space $X$, constructing a Schauder basis for $X$ can be done by finding a dense linearly independent sequence $(x_n)_{n=1}^\infty$ in $X$ so that the projection operators are uniformly bounded.  This is usually much easier done than proving that every vector in the space has a unique basis representation.  On the other hand, we don't have any other option than to show the reconstruction formula explicitly when proving that we have a continuous Schauder frame.  This makes constructing continuous Schauder frames often much harder than constructing Schauder bases.
  In this section we give a general procedure to construct a large class of non-trivial continuous Schauder frames for $\ell_p$ with $1<p<\infty$.  

The following lemma is very similar to Young's inequality for estimating the $L_p(\R)$ norm for convolutions of functions, and we prove it in a similar way.
\begin{lem}\label{T:Young}
Let $f\in L_1(\R)$ such that $\|\|f(t-n)\|_{\ell_1(\Z)}\|_{L_\infty(\R)}:=\sup_{t\in\R}\sum_{n\in\Z}|f(t-n)|<\infty$.  If $1< p<\infty$ and $(a_n)_{n\in\Z}\in \ell_p(\Z)$ then for $p'=p/(p-1)$, 
$$\int |\sum_{n\in\Z} f(t-n) a_n|^p \,dt\leq \|f\|_1 \|(a_n)\|_p^p \left(\sup_{t\in\R}\sum_{n\in\Z}|f(t-n)|\right)^{p/p'}  
$$
\end{lem}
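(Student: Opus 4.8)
The plan is to mimic the standard proof of Young's convolution inequality, replacing the outer $L_p$ integral on $\R$ and the sum over $\Z$ by a Hölder splitting that exploits the hypothesis $K := \sup_{t\in\R}\sum_{n\in\Z}|f(t-n)| < \infty$. The key observation is that for fixed $t$, the nonnegative weights $|f(t-n)|$ have total mass at most $K$, so we can treat $|f(t-n)|\,d\mu_t$ (where $\mu_t$ is counting measure on $\Z$) as a finite measure of total mass $\le K$ and apply Jensen/Hölder with respect to it.

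First I would, for each fixed $t\in\R$, apply Hölder's inequality with exponents $p'$ and $p$ to the sum $\sum_{n\in\Z} f(t-n)a_n$, writing $f(t-n)a_n = |f(t-n)|^{1/p'}\cdot\bigl(|f(t-n)|^{1/p}a_n\bigr)$ (up to a sign/phase that only helps). This yields
\begin{equation*}
\Bigl|\sum_{n\in\Z} f(t-n)a_n\Bigr|^p \le \Bigl(\sum_{n\in\Z}|f(t-n)|\Bigr)^{p/p'}\sum_{n\in\Z}|f(t-n)|\,|a_n|^p \le K^{p/p'}\sum_{n\in\Z}|f(t-n)|\,|a_n|^p.
\end{equation*}
Next I would integrate this over $t\in\R$, pull the constant $K^{p/p'}$ out, and apply Tonelli's theorem (everything is nonnegative and measurable) to interchange $\int_\R$ and $\sum_{n\in\Z}$:
\begin{equation*}
\int_\R\Bigl|\sum_{n\in\Z} f(t-n)a_n\Bigr|^p dt \le K^{p/p'}\sum_{n\in\Z}|a_n|^p\int_\R |f(t-n)|\,dt = K^{p/p'}\sum_{n\in\Z}|a_n|^p\,\|f\|_1,
\end{equation*}
using translation invariance of Lebesgue measure so that $\int_\R|f(t-n)|\,dt = \|f\|_1$ for every $n$. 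This is exactly the claimed bound $\|f\|_1\,\|(a_n)\|_p^p\,K^{p/p'}$.

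I do not expect a serious obstacle here; the only points requiring a word of care are: (i) verifying that the function $t\mapsto \sum_n f(t-n)a_n$ is well defined almost everywhere and measurable, which follows since $(a_n)\in\ell_p\subseteq\ell_\infty$ so $\sum_n|f(t-n)a_n|\le \|(a_n)\|_\infty K < \infty$ pointwise, and the partial sums are measurable; and (ii) justifying the Tonelli interchange, which is immediate once nonnegativity and $\sigma$-finiteness (of $\R$ with Lebesgue measure and $\Z$ with counting measure) are noted. If $p$ or $p'$ infinite edge cases are a concern they are excluded by the hypothesis $1<p<\infty$, so $p/p'=p-1$ is a finite positive exponent throughout.
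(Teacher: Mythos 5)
Your proposal is correct and follows essentially the same argument as the paper: the same Hölder splitting $f(t-n)a_n = |f(t-n)|^{1/p}|a_n|\cdot|f(t-n)|^{1/p'}$ with exponents $p$ and $p'$, the bound of $\bigl(\sum_n|f(t-n)|\bigr)^{p/p'}$ by the supremum, and then Tonelli/Fubini plus translation invariance to obtain $\|f\|_1\|(a_n)\|_p^p$. Your added remarks on measurability and the validity of the interchange are sound but not needed beyond what the paper already does.
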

\begin{proof}
\begin{align*}
\int |\sum_{n\in\Z} f(t-n) a_n|^p\,dt &\leq \int \left(\sum_{n\in\Z} |f(t-n) a_n|\right)^p\!\!\!dt\\
&=\int \left(\sum_{n\in\Z} |f(t-n)|^{1/p} |a_n||f(t-n)|^{1/p'}\right)^p\,dt\\
&\leq\int \left(\sum_{n\in\Z} |f(t-n)| |a_n|^p\right)\left(\sum_{n\in\Z}|f(t-n)|\right)^{p/p'}\!\!\!\!\!dt\quad\textrm{ by Holders}\\
&\leq \left(\int \sum_{n\in\Z} |f(t-n)| |a_n|^p\,dt\right)    \left(\sup_{t\in\R}\sum_{n\in\Z}|f(t-n)|\right)^{p/p'}  \\
&= \left(\sum_{n\in\Z} \int |f(t-n)| |a_n|^p\,dt\right)    \left(\sup_{t\in\R}\sum_{n\in\Z}|f(t-n)|\right)^{p/p'} \quad\textrm{ by Fubini}\\
&=  \|f\|_{L_1(\R)} \|(a_n)\|_{\ell_p(\Z)}^p     \left(\sup_{t\in\R}\sum_{n\in\Z}|f(t-n)|\right)^{p/p'} 
\end{align*}
\end{proof}

Using the above lemma, we are now able to construct a large class of continuous Schauder frames for $\ell_p$ for all $1<p<\infty$.

\begin{thm}\label{T:cframe}
Let $f:\R\rightarrow \R$ be a measurable function such that the following are satisfied.
\begin{enumerate}
\item $f\in L_1(\R)$.
\item $(f(\cdot-n))_{n\in\Z}$ is an ortho-normal sequence in $L_2(\R)$.
\item $\|\|f(t-n)\|_{\ell_1(\Z)}\|_{L_\infty(\R)}=\sup_{t\in\R}\sum_{n\in\Z}|f(t-n)|<\infty$
\end{enumerate}
Let $x_t=(f(t-n))_{n\in\Z}\in\ell_p(\Z)$ and $f_t=(f(t-n))_{n\in\Z}\in\ell_q(\Z)$.  Then $\psi:\R\rightarrow \ell_p(\Z)\times\ell_q(\Z)$ given by $\psi(t)=(x_t,f_t)$
 is a continuous Shauder frame of $\ell_p(\Z)$.   Furthermore, $\psi$ has suppression unconditionality constant $C_s=\|f\|_1sup_{t\in\R}\sum_{n\in\Z}|f(t-n)|$.
\end{thm}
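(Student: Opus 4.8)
The plan is to verify the Pettis‑integral reconstruction formula \eqref{E:cSF} for $\ell_p(\Z)$ directly, using Lemma \ref{T:Young} twice: once with exponent $p$ and once with the conjugate exponent $q=p/(p-1)$. Throughout set $K=\sup_{t\in\R}\sum_{n\in\Z}|f(t-n)|$, which is finite by hypothesis (3); this makes $x_t$ and $f_t$ well defined (indeed in $\ell_1(\Z)$) for every $t$, and for fixed $x=(a_n)\in\ell_p(\Z)$ and $g=(b_m)\in\ell_q(\Z)$ it makes the scalar functions $t\mapsto f_t(x)=\sum_n f(t-n)a_n$ and $t\mapsto g(x_t)=\sum_m f(t-m)b_m$ everywhere defined and measurable.

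The central step is the norm estimate. Fix $x\in\ell_p(\Z)$, $g\in\ell_q(\Z)$ and a measurable $E\subseteq\R$. By Hölder's inequality on $\R$ with the conjugate pair $(p,q)$,
\[ \int_E |f_t(x)\,g(x_t)|\,dt \;\le\; \Big(\int_\R |f_t(x)|^p\,dt\Big)^{1/p}\Big(\int_\R |g(x_t)|^q\,dt\Big)^{1/q}. \]
Applying Lemma \ref{T:Young} with exponent $p$ to the first factor and with exponent $q$ to the second (both with the same $f$, using hypotheses (1) and (3)), and using that $\tfrac1p+\tfrac1q=1$ together with $\tfrac1{p'}+\tfrac1{q'}=1$ (since $p'=q$ and $q'=p$), the powers of $\|f\|_1$ and of $K$ each combine to the first power and the product collapses to $\|f\|_1\,K\,\|x\|_p\|g\|_q=C_s\|x\|_p\|g\|_q$. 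Hence $g\mapsto\int_E f_t(x)g(x_t)\,d\mu(t)$ is a bounded linear functional on $\ell_q(\Z)$ of norm at most $C_s\|x\|_p$; since $(\ell_q(\Z))^*=\ell_p(\Z)$ there is a unique $x_E\in\ell_p(\Z)$ with $\|x_E\|_p\le C_s\|x\|_p$ satisfying $g(x_E)=\int_E f_t(x)g(x_t)\,d\mu(t)$ for all $g$. By the definition of the Pettis integral this says $t\mapsto f_t(x)x_t$ is Pettis integrable over $E$ with $\int_E f_t(x)x_t\,d\mu(t)=x_E$, and the bound $\|x_E\|\le C_s\|x\|$, valid for all $E$, gives the suppression unconditionality constant $C_s$ asserted in the statement.

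It then remains to identify $x_M=x_\R$ with $x$, i.e.\ to show $g(x_\R)=\sum_n a_nb_n$ for all $g\in\ell_q(\Z)$. Expanding, $g(x_\R)=\int_\R\big(\sum_n f(t-n)a_n\big)\big(\sum_m f(t-m)b_m\big)\,dt$, and I would justify interchanging the integral with the double sum by Tonelli: with $h(k)=\int_\R|f(s)|\,|f(s-k)|\,ds$ for $k\in\Z$, hypothesis (3) gives $\sum_k h(k)=\int_\R|f(s)|\sum_k|f(s-k)|\,ds\le K\|f\|_1<\infty$, and then the substitution $k=n-m$ with Hölder yields $\sum_{n,m}|a_nb_m|\,h(n-m)=\sum_k h(k)\sum_n|a_n|\,|b_{n-k}|\le\|h\|_{\ell_1}\|x\|_p\|g\|_q<\infty$. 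With absolute convergence in hand, Fubini gives $g(x_\R)=\sum_{n,m}a_nb_m\int_\R f(t-n)f(t-m)\,dt$, which by the orthonormality hypothesis (2) equals $\sum_n a_nb_n=g(x)$. Hence $x_\R=x$ and $\psi$ is a continuous Schauder frame of $\ell_p(\Z)$.

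I expect the main difficulty to be bookkeeping rather than conceptual: one must line up the two applications of Lemma \ref{T:Young} and the intervening Hölder split so that the exponents of $\|f\|_1$ and $K$ cancel precisely to produce $C_s$ and nothing larger. The one substantive use of hypothesis (3) beyond Lemma \ref{T:Young} is in extracting the $\ell_1(\Z)$‑summability of $(h(k))_k$, which is exactly what legitimizes the Tonelli/Fubini step and closes the argument.
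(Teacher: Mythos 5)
Your proof is correct, and its core is identical to the paper's: the same H\"older split on $\R$ into the conjugate pair $(p,q)$, followed by two applications of Lemma \ref{T:Young} (with exponents $p$ and $q$), so that the powers of $\|f\|_1$ and of $K=\sup_t\sum_n|f(t-n)|$ each sum to one and yield the bound $C_s\|x\|_p\|g\|_q$, after which $(\ell_q)^*=\ell_p$ produces $x_E$ and the suppression constant. The one place you diverge is the identification $x_\R=x$: the paper verifies the reconstruction only on the coordinate pairs, computing $\int f_t(e_n)e_m^*(x_t)\,dt=\delta_{nm}$ from orthonormality, and then tacitly relies on the boundedness $\|x_E\|\le C_s\|x\|$ plus density of $c_{00}$ to extend the identity $x_M=x$ to all of $\ell_p$; you instead prove it directly for arbitrary $x$ by a Tonelli/Fubini interchange, justified via $h(k)=\int|f(s)||f(s-k)|\,ds$, $\sum_k h(k)\le K\|f\|_1$, and the convolution-type estimate $\sum_{n,m}|a_nb_m|h(n-m)\le\|h\|_{\ell_1}\|x\|_p\|g\|_q$. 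Both routes are valid; yours costs one extra summability estimate but makes the final step fully explicit, whereas the paper's coordinate check is shorter but leaves the linearity-plus-density extension unstated.
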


\begin{proof}
Let $n\in\N$.  We have that 
$$\int f_t(e_n)e^*_n(x_t) dt=\int f(t-n) f(t-n)dt=\int |f(t)|^2dt=1
$$
and for all $m\in\Z$ with $n\neq m$ we have that
$$\int f_t(e_n)e^*_m(x_t) dt=\int f(t-n) f(t-m)dt=\int f(t) f(t-(m-n))dt=0.
$$
Thus, we just need to show that $\int f_t(x)x_t\,dt$ is Pettis integrable for all $x\in \ell_p$.
Let $E\subseteq\R$ be measurable, $x=(a_n)_{n\in\Z}\in \ell_p$, and $x^*=(b_n)_{n\in\Z}\in\ell_q$.  Let $g(t)=\sum_{n\in\Z}a_n 1_{[n,n+1)}$
\begin{align*}
|\int_E f_t(x)x^*(x_t)dt|&=|\int_E (\sum_{n\in\Z} f(t-n) a_n)(\sum_{m\in\Z}f(t-m)b_m)dt|\\
&\leq (\int |\sum_{n\in\Z} f(t-n) a_n|^p dt)^{1/p} (\int |\sum_{m\in\Z} f(t-m) b_m|^q dt)^{1/q}\quad\textrm{ by Holder's inequality}\\
&\leq \left(\|f\|_1^{1/p}\|(a_n)\|_p \left(\sup_{t\in\R}\sum_{n\in\Z}|f(t-n)|\right)^{1/q}\right)\left(\|f\|_1^{1/q}\|(b_n)\|_q \left(\sup_{t\in\R}\sum_{n\in\Z}|f(t-n)|\right)^{1/p}\right)\\
&\qquad\qquad\textrm{ by Lemma \ref{T:Young}}\\
&=C_s \|x\|_p \|x^*\|_q
\end{align*}

Thus, we have that $|\int_E f_t(x)x^*(x_t)dt|\leq C_s \|x\|_p \|x^*\|_q$ for all $x\in \ell_p$ and $x^*\in \ell_q$.  This gives that for each measurable $E$ and $x\in \ell_p$, the map $x^*\mapsto \int_E f_t(x)x^*(x_t)dt$ defines a bounded linear functional on $\ell_q$.  Hence there exists unique $x_E\in \ell_p$ such that $x^*(x_E)=\int_E f_t(x)x^*(x_t)dt$ for all $x^*\in \ell_q$.  Thus, $\int f_t(x)x_t\,dt$ is Pettis integrable for all $x\in \ell_p$.

\end{proof}

The following is an example  of using Theorem \ref{T:cframe}  to create a non-trivial continuous Schauder frame of $\ell_p$ for $1<p<\infty$.

\begin{example}
Let $(r_n)_{n\in\Z}$ be a sequence of different Rademacher functions on the interval $[0,1]$.  That is, $|r_n(x)|=1$ for all $x\in[0,1]$ and $\int_{0}^1 r_n(x) r_m(x) dx=0$  for all $n\neq m$.
Let $(a_n)_{n\in\Z}\in\ell_1$ such that $\|(a_n)_{n\in\Z}\|_2=1$.  Then $f=\sum_{n\in\Z} a_n T_n r_n$ satisfies the conditions of Theorem \ref{T:cframe} where $T_n$ is the operator which translates a function to the right by $n$.  The suppression unconditionality constant of the resulting continuous Schauder frame is $(\sum_{n\in\Z}|a_n|)^2$.
\end{example}

\section{Sampling continuous Schauder frames}\label{S:O}

Many important frames for Hilbert spaces arrise as samplings of continuous frames.  
In particular, wavelet frames, Gabor frames, and Fourier frames are all samplings of different continuous frames.  Futhermore, all the frames introduced by Daubechies, Grossmann, and Meyer \cite{DGM} in``Painless nonorthogonal expansions"are created by sampling different coherent states. Formally, if $(M,\Sigma,\mu)$ is a $\sigma$-finite measure space and $(x_t,f_t)_{t\in M}$ is a continuous frame of a Banach space $X$ and $(t_j)_{j=1}^\infty$ is a sequence in $M$ then $(x_{t_j},f_{t_j})_{j=1}^\infty$ is called a sampling of $(x_t,f_t)_{t\in M}$.    The discretization problem, posed  by Ali, Antoine, and Gazeau \cite{AAG2},  asks when a continuous frame of a Hilbert space can be sampled to obtain a frame.  
A solution for certain types of continuous frames was obtained by Fornasier and Rauhut using the theory of co-orbit spaces \cite{FoR} and a complete solution was recently given by Speegle and the second author \cite{FS} using the solution of the Kadison Singer problem by Marcus, Spielman, and Srivastava \cite{MSS}.  In particular, every bounded continuous frame on a Hilbert space may be sampled to obtain a discrete frame.  
\begin{problem}\label{P}
What are some Banach spaces where every bounded continuous Schauder frame may be sampled to obtain a discrete Schauder frame?   What are some Banach spaces where there exists a bounded continuous Schauder frame which cannot be sampled to obtain a discrete Schauder frame?
\end{problem}
Note that the discretization problem was solved for continuous Hilbert space frames, and thus Problem \ref{P} is even open for continuous Schauder frames for separable Hilbert spaces.

\end{document}